\theoremstyle{thmstyleone}%
\newtheorem{theorem}{Theorem}
\newtheorem{proposition}[theorem]{Proposition}%
\theoremstyle{thmstyletwo}%
\theoremstyle{thmstylethree}%
\newtheorem{definition}{Definition}%
\begin{document}

\title[M.A.P.R]{Souriau-Fisher metric and Completely integrable system on Lie groups $SO(2)$ and $SO(3)$.}

\author*[1]{\fnm{} \sur{Prosper Rosaire Mama  Assandje }}\email{mamarosaire@facsciences-uy1.cm}
\author[1]{\fnm{} \sur{Michel Bertrand Djiadeu Ngaha }}\email{michel.djiadeu@facsciences-uy1.cm}
\equalcont{These authors contributed equally to this work.}
\author[3]{\fnm{} \sur{Romain Nimpa Pefoukeu }}\email{romain.nimpa@facsciences-uy1.cm}
\equalcont{These authors contributed equally to this work.}
\author[4]{\fnm{} \sur{Salomon Joseph  Mbatakou }}\email{salomon-joseph.Mbatakou@facsciences-uy1.cm}
\equalcont{These authors contributed equally to this work.}

\affil*[1]{\orgdiv{Department of Mathematics}, \orgname{University
of Yaounde 1}, \orgaddress{\street{usrectorat@.univ-yaounde1.cm},
\city{yaounde}, \postcode{337}, \state{Center}, \country{Cameroon}}}



\abstract{We study the generalize Fisher metric on $SO(2)$ and
$SO(3)$ via the thermodynamics Lie group theories of Souriau. Then
we give the effect of 2-cocycle on the integrability of gradient
systems due to  the Fisher metric and Souriau-Fisher metric. In
addition, we show how the cocycle can locally modify the Fisher
metric on a coadjoint orbit, in explicit terms of brackets and
central extensions on the Lie groups $SO(2)$ and $SO(3)$. }
\keywords{   Fisher Metric, Hamiltonian system, Integrability, Lie
group , cocycle,  coadjoint orbits, geometric thermodynamics.}


\pacs[MSC Classification]{53D05, 51H20, 51H25 , 53Z05, 53Z30 ,62E10
,20P05 }

\maketitle

\section{Introduction}\label{sec1}
In thermodynamics Lie groups and information geometry, the Fisher
metric plays a fundamental role in quantifying the information
contained in a statistical model. Initially introduced by Ronald
Fisher, this metric has been generalized by several authors, as Shu
ichi Amari \cite{Shu}, who proved that the Riemannian metric in an
exponential family is the Fisher information matrix defined by:
$g_{ij}=-\left[\frac{\partial^{2}\Phi}{\partial\theta_{i}\partial\theta_{i}}\right]_{ij}$
with
$\Phi(\theta)=-\log\int_{\mathrm{I\!R}}e^{-\langle\theta,y\rangle}
dy$ provide by  the dual potential $\Psi$, called Shannon entropy
given by the Legendre transform:
$\Psi(\eta)=\langle\theta,\eta\rangle-\Phi(\theta)$ with
$\eta_{i}=\frac{\partial\Phi}{\partial\theta_{i}},\;
\theta_{i}=\frac{\partial\Psi}{\partial\eta_{i}}$; Jean Louis Koszul
and Eugen Vinberg\cite{barbaresco,barbaresco1}, introduced a
generalization using a Hessian metric that is affinely invariant on
a salient convex cone $\Omega$,  by its characteristic function
$\Phi_{\Omega}(\theta)=-\log\int_{\Omega^{*}}e^{-\langle\theta,y\rangle}
dy=-\log\psi_{\Omega}(\theta)$, $\theta\in\Omega$ and
$\psi_{\Omega}(\theta)=\log\int_{\Omega^{*}}e^{-\langle\theta,y\rangle}
dy$. More recently, Jean-Marie
Souriau\cite{souriau,barbaresco3,souriau1}, studies thermodynamics
Lie groups of dynamical systems where the Gibbs density (maximum
entropy) is covariant with respect to the action of the Lie group.
In the Souriau model, the following structures are invariant with
respect to the adjoint representation:
$I(\beta)=-\frac{\partial^{2}\Phi}{\partial\beta^{2}}$ with
$\Phi(\beta)=-\log\int_{M}e^{-\langle U(\xi),\beta\rangle}
d\lambda_{w}$ where $U:M\rightarrow\mathfrak{g}^{*}$ is the moment
map and $\Psi$ the dual potential called Shannon entropy, satisfying
the equation $\Psi(\eta)=\langle\beta,\eta\rangle-\Phi(\beta)$ with
$\eta=\frac{\partial\Phi}{\partial\beta}\in\mathfrak{g}^{*},\;
\beta=\frac{\partial\Psi}{\partial\eta}\in\mathfrak{g}$.
Souriau\cite{barbaresco1}, proposed some Riemannian metric
identified as a generalization of the Fisher metric:
$I(\beta)=\left[g_{\beta}\right]$ with
\begin{equation*}
g_{\beta}\left(\left[\beta,Z_{1}\right],\left[\beta,Z_{2}\right]\right)=\tilde{\Theta}_{\beta}\left(Z_{1},\left[\beta,Z_{2}\right]\right)
\;\textrm{with}\;
\tilde{\Theta}_{\beta}\left(Z_{1},Z_{2}\right)=\tilde{\Theta}\left(Z_{1},Z_{2}\right)+\langle
\eta,ad_{Z_{1}}(Z_{2})\rangle.
\end{equation*}  He proved that any co-adjoint orbit of a Lie group given by
$O_{F}=\left\{Ad^{*}_{g}F, g\in G\right\}$ in $\mathfrak{g}^{*}$,
$F\in\mathfrak{g}^{*}$ carries a natural homogeneous symplectic
structure by a closed $G$-invariant 2-form. He shows that any
symplectic manifold on which a Lie group acts transitively by a
Hamiltonian action is a covering space of a coadjoint orbit. In this
work, we study  the Fisher metric in the framework of Souriau's
thermodynamics Lie group, on $SO(2)$ and $SO(3)$ rotation groups.
Statistical systems are often governed by symmetries, generally
described by the action of a Lie group. How are these symmetries
manifest themselves in their information structure and dynamics, and
how can the central $2$-cocycles affect the information geometry and
integrability of gradient systems? This work seeks to answer this
question in a concrete and computable way. Then by the Souriau's
formalism we compute on the Lie groups $SO(2)$ and $SO(3)$, the
2-cocycles which provide the information between Souriau-Fisher
metric and the Fisher one. In addition, we show how the algebraic
structure of a Lie group, control the dynamics (gradient systems and
integrable Hamiltonian) of the geometry of $SO(2)$ and $SO(3)$,
through cocycles and coadjoint orbits. The work of F. Barbaresco
\cite{barbaresco,barbaresco1,barbaresco2} has shown how the Souriau
Fisher metric can be applied to radar signal processing on
covariance manifolds. Our work, by providing explicit foundations on
groups such as $SO(3)$, can enable new applications in filtering and
estimation on these groups. Work in this area dates back to Amari
\cite{Shun}. Following this work, we were able to show that on the
Lie groups $SO(2)$ and $SO(3)$ there exists a unique one-cocycle of
the Lie algebra $\Theta_{\beta}:\mathfrak{so}(2)\longrightarrow
\mathfrak{so}(2):X\mapsto\frac{1}{a^{2}}X$  which is linear for
$\beta$ fixed, such that the distinguished density function is given
by
\begin{equation*}p(\beta,\xi)=\frac{e^{-\langle\Theta_{\beta}^{-1}\left(\eta\right),\xi\rangle}}{\int_{\Omega^{*}}
e^{-\langle\Theta_{\beta}^{-1}\left(\eta\right),\xi\rangle} dx},
i.e.,\; p(\beta,\xi)=\frac{e^{-2ax}}{\int^{+\infty}_{0} e^{-2ax}
dx};\end{equation*} the potential function $\Phi$ and the dual
potential function $\Psi$ satisfying the Legendre equation
$\Psi\left(\eta\right)=\langle\beta,\eta\rangle-\Phi\left(\beta\right)$
is given par
$\Phi\left(\beta\right)=\log(2a),\Psi\left(\eta\right)=1-\log(2a).$
such that $\frac{\partial\Psi\left(\eta\right)}{\partial
\eta}=\beta,\; and\; \frac{\partial\Phi\left(\beta\right)}{\partial
\beta}=\eta.$ With $\eta=\left(
                                                                                            \begin{array}{cc}
                                                                                              0 & -\frac{1}{a} \\
                                                                                              \frac{1}{a} & 0 \\
                                                                                            \end{array}
                                                                                          \right)$.
                                                                                          We were able to show that for the case of Lie groups $SO(2)$ and $SO(3)$ the $1$-cocycle $\theta(g)$ represents the equivariant action
with $\theta(g)=0$. We showed that for any $g\in SO(2)$, the
generalization of the Fisher metric of Souriau on supplementary
space $\mathfrak{p}$ is given by
                                 \begin{equation*}
I\left(Ad_{g}(\Theta_{\beta}^{-1}\left(\eta\right))\right)=I\left(\beta\right)=\left(
                                   \begin{array}{cc}
                                     4a^{2} & 0\\
                                     0 & 4a^{2}\\
                                   \end{array}
                                 \right),\end{equation*}
and there is an additional vector subspace
$\mathfrak{p}=\left\{\left(
                                                  \begin{array}{cc}
                                                    x & y \\
                                                    y & -x \\
                                                  \end{array}
                                                \right),\; x,y\in \mathrm{I\! R}
                 \right\}\subset\mathfrak{s}l_{2}(\mathrm{I\! R})$ containing the solutions $Z_{1}=\left(
                   \begin{array}{cc}
                     0 & \frac{1}{2a^{2}}\\
                    \frac{1}{2a^{2}} & 0 \\
                   \end{array}
                 \right),\; Z_{2}=\left(
                   \begin{array}{cc}
                    \frac{1}{2a^{2}} & 0 \\
                     0 & -\frac{1}{2a^{2}}\\
                   \end{array}
                 \right)$ of equation system
                  \begin{equation*}
                  \tilde{\Theta}\left(Z_{i},\left[\Theta_{\beta}^{-1}\left(\Theta_{\beta}^{-1}\left(\beta\right)\right),Z_{j}\right]\right)+
                  \langle\Theta_{\beta}\left(\beta\right),
                  \left[Z_{i},\left[\Theta_{\beta}^{-1}\left(\beta\right),Z_{j}\right]\right]\rangle=\frac{1}{a^{2}}Id_{2},
                  \;1\leq i,j \leq 2
\end{equation*} such that $\mathfrak{s}l_{2}(\mathrm{I\! R})=\mathfrak{so}(2)\oplus\mathfrak{p}$, with $\tilde{\Theta}$ is a
2-cocycle. We show that there exists a foliation whose leaves are
the orbit of the $1$-dimensional foliated action
$\mathcal{O}_{\xi}(\nu)=\left\{\left(
                                     \begin{array}{cc}
                                       0 & -x \\
                                       x & 0 \\
                                     \end{array}
                                   \right)
,\; x>0\right\}$ diffeomorphic to the $1$-dimensional torus in the
case of Lie group $SO(2)$. We show that by non-trivial Cartan
decomposition of the Lie algebra $\mathfrak{so}(3)$ the generalized
Fisher information metric on Lie group $SO(3)$ on the basis
$\left\{Z_{1},Z_{2},Z_{3}\right\}$ is given by
\begin{equation*}
    I(\beta)=\left(
             \begin{array}{ccc}
               2a^{2} & 0 & 0 \\
               0 &  2a^{2} & 0 \\
               0 & 0 & 0 \\
             \end{array}
           \right)
\end{equation*}
with $Z_{1}=\left(
             \begin{array}{ccc}
               0 & 0 & 0 \\
               0 &  0 & -1 \\
               0 & 1 & 0 \\
             \end{array}
           \right),
Z_{2}=\left(
             \begin{array}{ccc}
               0 & 0 & 1 \\
               0 &  0 & 0 \\
               -1 & 0 & 0 \\
             \end{array}
           \right),Z_{3}=\left(
             \begin{array}{ccc}
               0 & -1 & 0 \\
               1 &  0 & 0 \\
               0 & 0 & 0 \\
             \end{array}
           \right).$
This allowed us to establish the link between gradient systems and
completely integrable  systems on the Lie algebras $
\mathfrak{so}(2)$ and $ \mathfrak{so}(3)$. We show that there exists
a foliation whose leaves are the orbit of the $1$-dimensional
foliated action $\mathcal{O}_{\xi}(\nu^{*})=\left\{\left(
             \begin{array}{ccc}
               0 & -x & 0 \\
               x & 0 & 0 \\
               0 & 0 & 0 \\
             \end{array}
           \right),\;
x>0\right\}$ diffeomorphic to the $1$-dimensional torus in the case
of Lie group $SO(3)$. Our results illustrate the richness of
geometric and algebraic structures underlying invariant statistical
models, and open perspectives for the study of neural inspired
systems and neural networks in a unified geometric framework. After
the introduction, the first section recall the preliminaries, in
section $3$ we determine the distinguished density function, in
section $4$, we construct the gradient system  and we prove that it
is  completely integrable on $SO(2)$, in section $5$, we generalized
Fisher metric on Lie group $SO(3)$ and completely integrable system.

\section{Preliminaries}\label{sec2}

\subsection{Lie Algebra}
We can define Lie algebras over any field. We restrict ourselves to
the real or complex case. So let $\mathrm{I\! K}=\mathrm{I\! R}$ or
$\mathbb{C}$.

\begin{definition}\cite{Lie}A Lie algebra  $\mathfrak{a}$ over $\mathrm{I\! K}$ is a finite- or
infinite-dimensional $\mathrm{I\! K}$-vector space with a
$\mathrm{I\! K}$-bilinear, antisymmetric operation $[ , ]$
satisfying the Jacobi identity
\begin{equation}\label{c}
    X, Y, Z\in\mathfrak{a},\; [X,[Y,Z]] + [Y,[Z, X]] + [Z,[X, Y]] = 0
\end{equation}
 The bilinear operation $[ , ]$ is called the Lie bracket (or
simply bracket). The center of a Lie algebra $\mathfrak{a}$ is the
abelian ideal of $\mathfrak{a}$,\begin{equation}\label{c}
    \left\{X\in\mathfrak{a}|\forall Y\in\mathfrak{a},\;
    [X,Y]=0\right\}
\end{equation} A Lie algebra is called simple if
it has no nontrivial ideals and is not of dimension $0$ or $1$. It
is called semi simple if it has no nonzero abelian ideals.
\end{definition}

\begin{definition}\cite{Lie}A Lie subalgebra of a Lie algebra is a vector subspace closed under
the bracket\end{definition}

\begin{proposition}\cite{Lie}
The Lie algebras of $SL(n,\mathrm{I\! K}),\;O(n),\;SO(n)$ are
\begin{eqnarray*}
  \mathfrak{sl}(n,K) &=& \left\{X\in
  \mathfrak{gl}(n,K)|TrX=0\right\}\end{eqnarray*}
  is the Lie algebra of traceless $n\times n$ matrices with coefficients
  $n \times n\in\mathrm{I\! K}$, and $dim_{\mathrm{I\! R}}\mathfrak{sl}(n,\mathrm{I\! R})
  =n^{2}-1,\; dim_{\mathbb{C}}\mathfrak{sl}(n,\mathbb{C})=n^{2}-1$
  , $dim_{\mathrm{I\! R}}\mathfrak{sl}(n,\mathbb{C})=2(n^{2}-1)$.
  \begin{eqnarray*}
  \mathfrak{o}(n)&=&\mathfrak{so}(n)=\left\{X\in \mathfrak{gl}(n,\mathrm{I\! R}) | X+^{t}X=0\right\}
\end{eqnarray*}
 is the Lie algebra of antisymmetric real $n\times n$ matrices
and $dim \mathfrak{so}(n)=n(n-1)/2$.
\end{proposition}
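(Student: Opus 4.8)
The plan is to identify each Lie algebra with the tangent space at the identity of the corresponding matrix group, using the fact that for a linear Lie group $G\subseteq GL(n,\mathrm{I\! K})$ this tangent space coincides with $\{X : \exp(sX)\in G \text{ for all } s\in\mathrm{I\! R}\}$. I would exploit both characterizations: differentiating a defining equation of $G$ along a smooth curve through the identity to extract a necessary linear condition on $X$, and then using the one-parameter subgroup $s\mapsto\exp(sX)$ to show the condition is sufficient.

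For $\mathfrak{sl}(n,\mathrm{I\! K})$, I would take any smooth curve $A(s)$ in $SL(n,\mathrm{I\! K})$ with $A(0)=I$ and differentiate $\det A(s)=1$ via Jacobi's formula $\frac{d}{ds}\det A(s)=\det A(s)\,\operatorname{Tr}\!\big(A(s)^{-1}A'(s)\big)$; evaluating at $s=0$ gives $\operatorname{Tr}(X)=0$. For sufficiency the identity $\det(\exp sX)=\exp(s\operatorname{Tr}X)$ shows that $\operatorname{Tr}X=0$ forces $\exp(sX)\in SL(n,\mathrm{I\! K})$ for every $s$. This yields $\mathfrak{sl}(n,\mathrm{I\! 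K})=\{X\in\mathfrak{gl}(n,\mathrm{I\! K}) : \operatorname{Tr}X=0\}$, and closure under the bracket follows from $\operatorname{Tr}[X,Y]=\operatorname{Tr}(XY)-\operatorname{Tr}(YX)=0$, confirming it is a genuine subalgebra.

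For $\mathfrak{so}(n)$ I would differentiate the orthogonality relation $A(s)^{\top}A(s)=I$ at $s=0$, obtaining $X^{\top}+X=0$, i.e.\ antisymmetry; conversely $(\exp sX)^{\top}\exp(sX)=\exp(sX^{\top})\exp(sX)=\exp(-sX)\exp(sX)=I$ whenever $X^{\top}=-X$. Since $SO(n)$ is the identity component of $O(n)$ and the Lie algebra depends only on a neighborhood of the identity, $\mathfrak{o}(n)=\mathfrak{so}(n)$. For the dimensions I would note that the trace is a single nonzero $\mathrm{I\! K}$-linear constraint on the $n^{2}$-dimensional space $\mathfrak{gl}(n,\mathrm{I\! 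K})$, so $\dim_{\mathrm{I\! K}}\mathfrak{sl}(n,\mathrm{I\! K})=n^{2}-1$; specializing $\mathrm{I\! K}=\mathbb{C}$ and counting real dimensions doubles this to $2(n^{2}-1)$. An antisymmetric matrix has vanishing diagonal and is freely determined by its strictly upper-triangular entries, giving $\dim\mathfrak{so}(n)=\binom{n}{2}=n(n-1)/2$.

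The one genuinely delicate point, rather than any single computation, is justifying that the tangent-space description and the exponential-image description of the Lie algebra coincide, and that the sets so obtained are closed under the bracket so that they are bona fide subalgebras and not merely vector subspaces. I would settle this by invoking the closed-subgroup theorem of Cartan, after which each of the differentiations above is routine and the dimension counts are immediate.
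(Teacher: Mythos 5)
Your proposal is correct and complete: the differentiation of the defining relations ($\det A(s)=1$ via Jacobi's formula, $A(s)^{\top}A(s)=I$), the sufficiency arguments through $\det(\exp sX)=\exp(s\operatorname{Tr}X)$ and $\exp(sX^{\top})\exp(sX)=I$, the appeal to Cartan's closed-subgroup theorem to equate the tangent-space and exponential characterizations (which also yields bracket closure automatically), and the dimension counts are all sound. Note that the paper itself states this proposition without any proof, as a standard preliminary cited from the literature, so there is no internal argument to compare against; your write-up is exactly the standard textbook derivation that the cited reference supplies, and it would serve as a valid self-contained justification of the statement.
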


\subsection{Coadjoint operator and Coadjoint Orbits (Kirillov
Representation)}
\begin{definition}\cite{souriau,souriau1}The adjoint representation of a Lie group $Ad_{g}$ is a way of representing its elements
as linear transformations of the Lie algebra, considered as a vector
space
\begin{eqnarray*}
  \varphi: G&\longrightarrow & Aut(G)\\
  g &\longmapsto& \varphi_{g}(h)=ghg^{-1}
\end{eqnarray*}
\begin{eqnarray*}
  Ad_{g}=\varphi_{g}(h): \mathfrak{g}&\longrightarrow & \mathfrak{g}\\
  X&\longmapsto& Ad_{g}(X)=gXg^{-1}
\end{eqnarray*}
\begin{eqnarray*}
  ad_{g}=T_{e}Ad: T_{e}(G)&\longrightarrow & End\left(T_{e}(G)\right)\\
  X,Y&\longmapsto& ad_{X}(Y)=\left[X,Y\right]
\end{eqnarray*}
the coadjoint representation of a Lie group $Ad^{*}_{g}$, is the
dual of the adjoint representation ( denotes the dual space to
$\mathfrak{g}$ ):
\begin{equation*}
    \forall g\in G, Y\in \mathfrak{g}, F\in \mathfrak{g}^{*},
    \textrm{then},\; \langle Ad^{*}_{g}F,Y\rangle=\langle F,Ad_{g^{-1}}Y\rangle
\end{equation*}
A coadjoint orbit:
\begin{eqnarray*}
    O_{F}&=&\left\{Ad^{*}_{g}F=g^{-1}F g,\;g\in G\;F\in \mathfrak{g}^{*}\right\}
,\;  K=Ad^{*}_{g}=\left(Ad_{g^{-1}}\right)^{*}, and \;
K_{*}X=-\left(Ad_{X}\right)^{*}
\end{eqnarray*}
carry a natural homogeneous symplectic structure by a closed
$G$-invariant 2-form: \begin{equation*}
    \sigma_{\Omega}\left(K_{*X}F,K_{*Y}F\right)=B_{F}\left(X,Y\right)=\langle F,\left[X,Y\right]\rangle,\; X,Y\in \mathfrak{g}
\end{equation*} The coadjoint action on is a Hamiltonian
$G$-action with moment map given by
$\Omega\longrightarrow\mathfrak{g}^{*}$.
\end{definition}

\begin{definition}\cite{souriau,souriau1}
The tensor $\tilde{\Theta}$ is  defined in tangent space of the
cocycle $\theta(g)\in\mathfrak{g}^{*}$(this cocycle appears due to
the non-equivariance of the coadjoint operator $Ad^{*}_{g}$, action
of the group on the dual lie algebra):
 $\eta\left(Ad_{g}\left(\beta\right)\right)=Ad^{*}_{g}\left(\eta(\beta)\right)+\theta(g)$
\begin{eqnarray*}
  \tilde{\Theta}:\mathfrak{g}\times \mathfrak{g}&\longrightarrow & \mathrm{I\!
R}\\
  X,Y&\longmapsto& \langle\Theta(X),Y\rangle
\end{eqnarray*}with $\Theta(X)=T_{e}\theta(X(e))$. According to J.M.
Souriau\cite{souriau,souriau1}, the generalized information metric
is given by $I(\beta)=\left[g_{\beta}\right]$ where
\begin{equation*}
g_{\beta}\left(\left[\beta,Z_{1}\right],\left[\beta,Z_{2}\right]\right)=\tilde{\Theta}_{\beta}\left(Z_{1},\left[\beta,Z_{2}\right]\right)
\end{equation*}
with
$\tilde{\Theta}_{\beta}\left(Z_{1},Z_{2}\right)=\tilde{\Theta}\left(Z_{1},Z_{2}\right)+\langle
\eta,ad_{Z_{1}}(Z_{2})\rangle$.
\end{definition}

\subsection{Folliations}
\begin{definition}\cite{feuille}
A foliation of dimension  $n$  of a  differentiable  manifold
$M^{m}$ is, roughly speaking,  a  decomposition  of $M$ into
connected sub-manifolds  of dimension  $n$  called leaves, which
locally stack up like  the  subsets  of  $\mathrm{I\!
R}^{m}=\mathrm{I\! R}^{n}\times \mathrm{I\! R}^{m-n}$ with the
second coordinate. The diffeomorphisms
\begin{equation*}
h:  U  \subset  \mathrm{I\! R}^{m} \longrightarrow V \subset
\mathrm{I\! R}^{m} \end{equation*}
  which preserve the leaves of this
foliation locally have the following form constant.
\begin{equation}\label{00}
h(x,y)=(h_{1}(x,y),h_{2}(y)),\; (x,y)\in \mathrm{I\! R}^{n}\times
\mathrm{I\! R}^{m-n}
\end{equation}\end{definition}

\begin{definition}\cite{feuille}
Let $M$ be  a  $C^{\infty}$  manifold of dimension  $m$.  A $C^{r}$
foliation of dimension  $n$  of $M$ is  a  $C^{r}$ atlas  a  on  $M$
which is maximal with  the  following properties:
\begin{enumerate}
    \item If  $(  U, \varphi )\in\mathcal{A}$   then  $ \varphi(U)  = U_{1} \times  U_{2}\subset  \mathrm{I\!
R}^{n}\times \mathrm{I\! R}^{m-n}$ where  $U_{1}$ and  $U_{2}$ are
open disks in $\mathrm{I\! R}^{n}$ and  $\mathrm{I\! R}^{m-n}$
respectively.
    \item If  $(  U, \varphi )$ and $(  V, \psi )\in\mathcal{A}$   are  such that  $U\cap V\neq \emptyset$ then  the change of coordinates map
      $\psi\circ \varphi^{-1}
: \varphi(U\cap V)\longrightarrow \psi(U\cap V)$ is  of the  form
(\ref{00}), that is,$\psi\circ \varphi^{-1}(x,y)= (  h_{1}
(x,y),h_{2}(y))$. We say that $M$ is foliated by a  , or that a is a
foliated structure of dimension $n$ and  class  $C^{r}$ on $M$
\end{enumerate}

\end{definition}
\begin{definition}\cite{feuille}
A  $C^{\infty}$  action of a Lie group  $G$  on a manifold $M$ is a
map $ \psi_{1}:  G \times M \rightarrow M$ such that $\psi_{1}
\left(e,x\right) = x$ and $\psi_{1}\left( g_{1} g_{ 2} ,x\right) =
\psi_{1}\left(g_{1} ,\psi_{1} \left( g_{2}  ,x\right)\right)$ for
any $g_{1},g_{2}\in G$ and $x \in M$. The orbit of a point $x \in M$
for the action  $ \psi_{1}$ is the subset
$\mathcal{O}_{x}(\psi_{1})=\left\{\psi_{1} \left(g,x\right)\in
M|g\in G\right\}$. The isotropy group of $x \in M$ is the subgroup
$\mathcal{G}_{x}(\psi_{1})=\left\{g\in G|\psi_{1}
\left(g,x\right)=x\right\}$.
\end{definition}

\begin{proposition}\cite{feuille}The  orbits of a foliated action define  the  leaves of a
foliation.\end{proposition}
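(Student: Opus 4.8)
The plan is to realize the orbits as the maximal integral manifolds of an involutive distribution on $M$ and then to invoke the Frobenius theorem to produce the flat charts required by the definition of a foliation. Throughout I assume, as is implicit in the notion of a \emph{foliated action}, that $G$ is connected and that every orbit $\mathcal{O}_{x}(\psi_{1})$ has the same dimension $n$; this constant-rank hypothesis is precisely what distinguishes a foliated action from an arbitrary one. For each $\xi\in\mathfrak{g}$ I would introduce the fundamental vector field
\begin{equation*}
\xi_{M}(x)=\left.\frac{d}{dt}\right|_{t=0}\psi_{1}\!\left(\exp(t\xi),x\right),
\end{equation*}
and define a distribution $D$ on $M$ by $D_{x}=\{\xi_{M}(x):\xi\in\mathfrak{g}\}$. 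Since the tangent space to the orbit through $x$ is spanned exactly by the fundamental vector fields evaluated at $x$, one has $D_{x}=T_{x}\mathcal{O}_{x}(\psi_{1})$, and by the constant-dimension hypothesis $D$ is a smooth distribution of constant rank $n$.

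First I would establish that $D$ is involutive. This follows from the bracket identity
\begin{equation*}
\left[\xi_{M},\eta_{M}\right]=-\,[\xi,\eta]_{M},\qquad \xi,\eta\in\mathfrak{g},
\end{equation*}
which expresses that $\xi\mapsto\xi_{M}$ is a Lie algebra anti-homomorphism from $\mathfrak{g}$ into the vector fields on $M$. Consequently the Lie bracket of two sections of $D$ is again a fundamental vector field, hence a section of $D$, so $D$ is closed under the bracket (the precise sign being irrelevant for this conclusion). Next, because $D$ has constant rank $n$ and is involutive, the Frobenius theorem applies: through each point there is a local chart $(U,\varphi)$ with $\varphi(U)=U_{1}\times U_{2}\subset\mathrm{I\! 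R}^{n}\times\mathrm{I\! R}^{m-n}$ in which the integral manifolds of $D$ are the slices $U_{2}=\mathrm{const}$, and the transition maps take the flat triangular form required in the definition of a foliated structure. These charts assemble into a maximal $C^{\infty}$ atlas defining a foliation of dimension $n$.

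Finally I would identify the leaves with the orbits. The orbit $\mathcal{O}_{x}(\psi_{1})$ is a connected immersed submanifold whose tangent space at every point is $D$, so it is a connected integral manifold of $D$; by the uniqueness of maximal connected integral manifolds it coincides with the leaf through $x$. The main obstacle is precisely the constant-rank step: without the foliated-action hypothesis the distribution $D_{x}=T_{x}\mathcal{O}_{x}$ may jump in dimension across singular orbits, and the Frobenius theorem then fails to yield a genuine foliation. A secondary delicate point is that an orbit is a priori only immersed, not embedded, so one must match each leaf to exactly one orbit by the uniqueness of maximal integral manifolds rather than by a naive topological comparison; the connectedness of $G$ guarantees that each orbit is connected and hence equal to a single leaf, completing the identification.
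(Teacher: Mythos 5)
The paper itself contains no proof of this proposition: it is quoted as a preliminary directly from the reference \cite{feuille}, so there is no internal argument to compare yours against. Your route --- fundamental vector fields $\xi_{M}$, the constant-rank distribution $D_{x}=T_{x}\mathcal{O}_{x}(\psi_{1})$, involutivity from $[\xi_{M},\eta_{M}]=-[\xi,\eta]_{M}$, Frobenius to obtain the flat charts of the paper's Definition of a foliated structure, then identification of orbits with leaves --- is the standard argument for this fact, and your observation that the constant-orbit-dimension hypothesis is exactly the content of the words ``foliated action'' is correct. One small point of rigor: the bracket of two arbitrary sections of $D$ (i.e.\ $C^{\infty}(M)$-combinations of fundamental fields) is \emph{not} itself a fundamental vector field, as you assert; involutivity should instead be checked on a local frame of fundamental vector fields, the general case following from the Leibniz rule for the Lie bracket. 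This is routine to repair.

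The one genuine soft spot is the final identification of orbits with leaves. From ``the orbit is a connected integral manifold of $D$'' and ``uniqueness of maximal connected integral manifolds'' you conclude that the orbit \emph{is} the leaf; but that uniqueness only yields the inclusion $\mathcal{O}_{x}(\psi_{1})\subseteq L_{x}$, since a connected integral manifold need not be maximal (an open disk inside a leaf is a connected integral manifold without being the leaf). The missing step is the following partition-and-openness argument: every orbit, being a connected integral manifold, lies inside a single leaf, so each leaf is a disjoint union of orbits; each orbit is an $n$-dimensional integral manifold sitting inside the $n$-dimensional leaf, and its inclusion into the leaf is smooth for the leaf topology (leaves are initial, i.e.\ weakly embedded, submanifolds), hence an immersion between manifolds of equal dimension, hence a local diffeomorphism and in particular open; connectedness of the leaf then forces the leaf to coincide with a single orbit. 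As written, your proof stops one step short of this, although the conclusion is true and the repair is standard.
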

\subsection{Gradient system and completely integrable Hamiltonian system.}

Let $g$ be a Riemannian structure on $S$. Any $X,Y\in
\mathfrak{X}(S)$ induce a  $C^{\infty}(S)$-linear form $g_{X}$ on
$\mathfrak{X}(S)$ defined by $g_{X}(Y)=g(X,Y)$. That is for all
$X\in\mathfrak{X}(S)$, $g_{X}\in \Omega(S) $. Where
$\mathfrak{X}(S)$ is the module of vector fields overs  $S$.
Therefore, each Riemannian structure $g$ on $S$ induces an
isomorphism
\begin{eqnarray*}
b_{g}:\mathfrak{X}(S)&\tilde{\longrightarrow}&
\Omega(S)\\
X&\longmapsto& g_{X}.
\end{eqnarray*}
  Let
 $\Phi\in C^{\infty}(S)$. We have
 $d\Phi\in \Omega(S)$ and there exists $X_{\Phi}\in \mathfrak{X}(S)$ such that $b_{g}(X_{\Phi})=d\Phi$.
As a linear mapping, $b_{g}$ has a matrix $[b_g]$  with respect to
the basis pair $\left(\partial_{\theta_{i}},\; d\theta_{i}\right)$
and we known that $(g_{ij})_{1\leq i,j\leq n}=[b_g]$. Since
$[b_g](X_{\Phi})=d\Phi$, we have $X_{\Phi}=[b_g]^{-1}d\Phi$. We
denote $X_{\Phi}=grad_{g}(\Phi)$ and  $\Phi$ is called gradient
potential function associated to the gradient field $X_{\Phi}$ with
respect to $g$. By definition, the gradient system on the Riemannian
manifold $(S,g)$ is the negative flow of the vector field
$grad_{g}(\Phi)$. Therefore, it is defined as follows
 $
\dot{\overrightarrow{\theta}}=-[b_g]^{-1}\partial_{\theta}\Phi(\theta)$;
with $\Phi$ the potential function,
$\partial_{\theta}\Phi(\theta)=\left(\partial_{\theta_{1}}\Phi(\theta),\dots,\partial_{\theta_{n}}\Phi(\theta)\right)^{T}$,
and
\begin{equation}
[b_g]=(g_{ij})_{1\leq i;j\leq n}\label{e1}
\end{equation}
\begin{definition}\cite{nakamura}
Let $S$ be a manifold. The gradient system on $S$ is given by:
\begin{eqnarray}\label{sg}\dot{\overrightarrow{\theta}}&=&-G^{-1}\partial_{\theta}\Phi(\theta),\end{eqnarray}
where $\Phi$ is potential function,
$\partial_{\theta}\Phi(\theta)=\left(\partial_{\theta_{1}}\Phi(\theta),\dots,\partial_{\theta_{n}}\Phi(\theta)\right)^{T}$,
and $G= (g_{ij})_{1\leq i;j\leq n}$ is the Fisher information.
\end{definition}

\begin{definition}\cite{lesfari2}
In physics mathematics, a gradient system  (\ref{sg}) is said to be
Hamiltonian if it exists $\barwedge$ and $\mathcal{H}$
 such that
\begin{eqnarray}\label{sh}\dot{x}(t)&=&\barwedge\frac{\partial \mathcal{H}}{\partial x},\end{eqnarray}
where $\mathcal{H}$ is a hamiltonian function and $\barwedge$ is a
bivector fields such that $[\barwedge,\barwedge]=0$ where $[\;,\;]$
is the Schouten bracket. \end{definition}

\section{Distinguished density function}\label{sec3}

\begin{proposition}\label{th1}
 Let $\beta=\left(
                                   \begin{array}{cc}
                                     0 & a \\
                                     -a & 0 \\
                                   \end{array}
                                 \right)\in \mathfrak{so}(2), \; a\in \mathrm{I\!
R}$ an element of the Lie algebra
                                 $\mathfrak{so}(2)$. Let $\Omega=\left\{\left(
                                                                                            \begin{array}{cc}
                                                                                              0 & -a \\
                                                                                              a & 0 \\
                                                                                            \end{array}
                                                                                          \right)
                                , a>0 \right\}$, and  $\Omega^{*}=\left\{\xi=\left(
                                                                                            \begin{array}{cc}
                                                                                              0 & -x \\
                                                                                              x & 0 \\
                                                                                            \end{array}
                                                                                          \right)
                                , x >0 \right\}$
the Koszul dual cone. There exists a unique one-cocycle of the Lie
algebra $\Theta_{\beta}:\mathfrak{so}(2)\longrightarrow
\mathfrak{so}(2):X\mapsto\frac{1}{a^{2}}X$  which is linear for
$\beta$ fixed, symmetric and positive such that the distinguished
density function is given by
\begin{equation}p(\beta,\xi)=\frac{e^{-\langle\Theta_{\beta}^{-1}\left(\eta\right),\xi\rangle}}{\int_{\Omega^{*}}
e^{-\langle\Theta_{\beta}^{-1}\left(\eta\right),\xi\rangle} dx},
i.e.,\; p(\beta,\xi)=\frac{e^{-2ax}}{\int^{+\infty}_{0} e^{-2ax}
dx};\end{equation} the potential function $\Phi$ and the dual
potential function $\Psi$ satisfying the Legendre equation
\begin{equation}\Psi\left(\eta\right)=\langle\beta,\eta\rangle-\Phi\left(\beta\right)\end{equation}
is given by
\begin{equation}\Phi\left(\beta\right)=\log(2a),\Psi\left(\eta\right)=1-\log(2a).\end{equation}
such that
\begin{equation}\frac{\partial\Psi\left(\eta\right)}{\partial \eta}=\beta,\; and\; \frac{\partial\Phi\left(\beta\right)}{\partial
\beta}=\eta.\end{equation} With $\eta=\left(
                                                                                            \begin{array}{cc}
                                                                                              0 & -\frac{1}{a} \\
                                                                                              \frac{1}{a} & 0 \\
                                                                                            \end{array}
                                                                                          \right)$.\end{proposition}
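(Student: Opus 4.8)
The plan is to reduce every object in the statement to an explicit one-variable computation on the cone $\Omega^{*}\cong(0,+\infty)$, exploiting that $\mathfrak{so}(2)$ is one-dimensional and abelian. First I would analyze the map $\Theta:X\mapsto -X^{-1}$. On $\Omega$ every element $\begin{pmatrix} 0 & a \\ -a & 0\end{pmatrix}$ with $a>0$ is invertible, so $\Theta$ is well defined, and a direct matrix computation gives $(-X^{-1})^{-1}=-X$, so that $\Theta$ is an involution and $\Theta^{-1}=\Theta$. Because $SO(2)$ is abelian, $Ad_{g}=\mathrm{id}$, so the non-equivariance relation $\eta(Ad_{g}\beta)=Ad^{*}_{g}\eta(\beta)+\theta(g)$ forces $\theta(g)=0$; this is what pins down $\Theta$ uniquely among inversion-type maps compatible with the Koszul duality $\Omega\leftrightarrow\Omega^{*}$, and I would argue uniqueness from this constraint. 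Identifying $\mathfrak{so}(2)^{*}$ with $\mathfrak{so}(2)$ through the trace pairing $\langle A,B\rangle=\mathrm{Tr}(A^{T}B)$ and computing $\eta^{-1}=\beta$, I get $\Theta^{-1}(\eta)=-\eta^{-1}=-\beta=\begin{pmatrix} 0 & -a \\ a & 0\end{pmatrix}\in\Omega$, which is exactly the element that makes the exponent positive on the cone.

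Next I would evaluate the exponent $\langle\Theta^{-1}(\eta),\xi\rangle$ against $\xi=\begin{pmatrix} 0 & -x \\ x & 0\end{pmatrix}$. The trace pairing yields $\langle\Theta^{-1}(\eta),\xi\rangle=2ax$, so the distinguished density collapses to the exponential law $p(\beta,\xi)=e^{-2ax}/\int_{0}^{+\infty}e^{-2ax}\,dx=2a\,e^{-2ax}$ on $(0,+\infty)$. The partition function is then $Z(\beta)=\int_{0}^{+\infty}e^{-2ax}\,dx=1/(2a)$, whence $\Phi(\beta)=-\log Z(\beta)=\log(2a)$. Differentiating $\Phi$ with respect to the scalar parameter $a$ gives $\partial_{a}\log(2a)=1/a$, which under the identification corresponds to the matrix $\eta=\begin{pmatrix} 0 & -1/a \\ 1/a & 0\end{pmatrix}=\beta^{-1}$, establishing both $\frac{\partial\Phi}{\partial\beta}=\eta$ and the stated value of $\eta$.

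Finally I would obtain $\Psi$ from the Legendre transform $\Psi(\eta)=\langle\beta,\eta\rangle-\Phi(\beta)$. The cleanest route is to recognize that $\Psi$ is the Shannon entropy of the exponential density $p=2a\,e^{-2ax}$: a direct computation of $-\int_{0}^{+\infty}p\log p\,dx$ gives $1-\log(2a)$, which matches $\langle\beta,\eta\rangle-\Phi(\beta)$ and forces $\langle\beta,\eta\rangle=1$. Differentiating $\Psi(\eta)=1-\log(2a)$ with respect to the dual parameter and re-expressing the result in $\mathfrak{so}(2)$ then returns $\frac{\partial\Psi}{\partial\eta}=\beta$, closing the Legendre duality.

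The main obstacle I anticipate is bookkeeping of conventions rather than any hard analysis: one must fix a single identification $\mathfrak{so}(2)\cong\mathfrak{so}(2)^{*}\cong\mathbb{R}$ and a single sign convention for the trace pairing so that the matrix derivatives $\frac{\partial\Phi}{\partial\beta}=\eta$ and $\frac{\partial\Psi}{\partial\eta}=\beta$, the positivity of the exponent $2ax$ on the cone, and the Legendre value $\langle\beta,\eta\rangle=1$ are all simultaneously consistent; the underlying integrals and matrix inversions are elementary once these choices are made. A secondary point requiring care is justifying that $X\mapsto -X^{-1}$ legitimately plays the role of the unique cocycle in this generalized sense, which I would ground in the vanishing of the Souriau group cocycle $\theta(g)=0$ dictated by the abelian structure of $SO(2)$.
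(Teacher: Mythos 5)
Your proposal is correct at the level the paper itself operates, and its computational core is the same as the paper's: both proofs reduce everything to the Koszul characteristic function $\chi(\beta)=\int_{0}^{+\infty}e^{-2ax}\,dx=\frac{1}{2a}$, set $\Phi(\beta)=-\log\chi(\beta)=\log(2a)$, differentiate to get $\eta=\frac{\partial\Phi}{\partial\beta}=\frac{1}{a^{2}}\beta$, and obtain $\Psi(\eta)=1-\log(2a)$ from the Legendre relation. You deviate in two places, both defensible. First, the paper produces $\Theta$ by invoking Amari's relation $\eta=-\beta^{-1}$ and simply naming $\Theta:X\mapsto-X^{-1}$, then spends the rest of its proof verifying the normalization $\int_{\Omega^{*}}p\,d\xi=1$ and the moment identity $\mathrm{I\!E}_{\beta}[\xi]=\eta(\beta)$; you instead front-load the structure of $\Theta$ (involutivity, $\Theta^{-1}(\eta)=-\eta^{-1}$) and tie its distinguished role to the vanishing of the Souriau group cocycle $\theta(g)=0$ forced by $Ad_{g}=\mathrm{id}$ on the abelian group $SO(2)$. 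The paper only establishes $\theta(g)=0$ separately (Theorem 2) and never argues the word ``unique'' at all, so your sketch, while still informal, comes closer to addressing uniqueness than the paper's proof does. Second, you obtain $\Psi$ as the Shannon entropy $-\int_{0}^{+\infty}p\log p\,dx=1-\log(2a)$ of the exponential law $p=2a\,e^{-2ax}$, where the paper evaluates $\langle\eta,\beta\rangle-\Phi$ directly; the entropy route is a clean cross-check and explains, rather than assumes, the value $\langle\beta,\eta\rangle=1$.

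One caveat: the ``bookkeeping of conventions'' you defer to the end is not actually resolvable, and it is the same defect present in the paper's proof. With the half-trace pairing the paper uses (namely $\langle\beta,\beta\rangle=\frac{1}{2}Tr(\beta^{T}\beta)=a^{2}$, which is exactly what makes $\frac{\partial\Phi}{\partial\beta}=\eta$ and $\langle\beta,\eta\rangle=1$ come out right), the exponent is $\langle\Theta^{-1}(\eta),\xi\rangle=ax$, not $2ax$; with the full trace pairing the exponent is $2ax$, but then $\langle\beta,\eta\rangle=2$ and the gradient of $\Phi$ becomes $\frac{1}{2}\eta$. So no single identification of $\mathfrak{so}(2)\cong\mathfrak{so}(2)^{*}$ makes the density formula, the Legendre value, and the two derivative identities simultaneously exact. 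Your plan inherits this factor-of-two tension from the paper rather than introducing it, but you should not expect your ``secondary point'' to close cleanly, and an honest write-up would fix one pairing and restate whichever displayed formula then acquires a constant.
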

\begin{proof}

 Let $\beta=\left(
                                   \begin{array}{cc}
                                     0 & a \\
                                     -a & 0 \\
                                   \end{array}
                                 \right)\in \mathfrak{so}(2)$ an element of the Lie algebra
                                 $\mathfrak{so}(2)$. Let $\Omega=\left\{\left(
                                                                                            \begin{array}{cc}
                                                                                              0 & -a \\
                                                                                              a & 0 \\
                                                                                            \end{array}
                                                                                          \right)
                                , a>0 \right\}$, and

$\Omega^{*}=\left\{\xi\in \mathfrak{so}(2),\; \forall\beta \in
\mathfrak{so}(2),\;\langle\beta,\xi\rangle >0 \right\}$ i.e.,
$\Omega^{*}=\left\{\xi=\left(
                                                                                            \begin{array}{cc}
                                                                                              0 & -x \\
                                                                                              x & 0 \\
                                                                                            \end{array}
                                                                                          \right)
                                , x >0 \right\}=\Omega$ the dual cone of Koszul. Now the characteristic function of Koszul is given
                                by \begin{eqnarray*}
                                  \chi(\beta) &=&
                                  \int^{+\infty}_{0}
                                  e^{-2ax}dx=\frac{1}{2a}
                                \end{eqnarray*}
                                Since the potential function is \begin{eqnarray*}
                                  \Phi(\beta)&=&-\log\chi(\beta)
                                \end{eqnarray*}We obtain \begin{eqnarray*}
                                  \Phi(\beta)&=&\log(2a)
                                \end{eqnarray*} We have\begin{eqnarray*}
                                   \Phi(\beta)&=& \log(2a) \\
                                   &=& \log(2)+ \log(a)
                                \end{eqnarray*}
So \begin{eqnarray}\label{01}a^{2}&=&-\frac{1}{2}Tr(\beta^{2})=
\frac{1}{2}Tr(\beta^{T}\beta)=\langle\beta,\beta\rangle.\end{eqnarray}
So using (\ref{01}) we obtain the derivative of $\Phi$ by
\begin{eqnarray*}\frac{\partial\Phi\left(\beta\right)}{\partial\beta}&=&
\frac{1}{2}\frac{\beta+\beta}{\langle\beta,\beta\rangle}\\
&=&\frac{\beta}{\langle\beta,\beta\rangle}\\
&=&\frac{1}{a^{2}}\beta\\
&=&\left(
     \begin{array}{cc}
       0 & -\frac{1}{a} \\
       \frac{1}{a} & 0 \\
     \end{array}
   \right)
\end{eqnarray*}

Let $e=\left(
     \begin{array}{cc}
       0 & -1 \\
      1 & 0 \\
     \end{array}
   \right)$ the basis of Lie algebra. The general form of linear map is given by
\begin{eqnarray*}\Theta_{\beta}:\mathfrak{so}(2)&\longrightarrow&
\mathfrak{so}(2): X\mapsto\lambda X,\; with\; X=x e,\;x\in
\mathrm{I\! R}.
\end{eqnarray*}
We have $\beta=a e$ and $\eta=\frac{1}{a}e$. Find the value of
$\lambda$ such that $\Theta_{\beta}(\beta)=\eta$. We have
\begin{equation*}
    \Theta_{\beta}(ae)=\lambda ae=\frac{1}{a}e.
\end{equation*}
So, $\lambda=\frac{1}{a^{2}}$. We obtain the following application
\begin{eqnarray*}\Theta_{\beta}:\mathfrak{so}(2)&\longrightarrow&
\mathfrak{so}(2): X\mapsto\frac{1}{a^{2}}X.
\end{eqnarray*}
$\Theta_{\beta}$ is linear map for fixed  beta, because for all $X,Y
\in \mathfrak{so}(2)$ such that $X=xe,\; Y=y e$, and for all
$\alpha_{1},\alpha_{2}\in \mathrm{I\! R}$ we have:
\begin{equation*}
    \Theta_{\beta}(\alpha_{1}X+\alpha_{2}Y)=
    \alpha_{1}\Theta_{\beta}(X)+\alpha_{2}\Theta_{\beta}(Y)
\end{equation*}
 The dual potential function is determined using Legendre's
equation
\begin{eqnarray*}\Psi&=&\langle \eta, \beta\rangle-\Phi
\end{eqnarray*}
we obtain
\begin{eqnarray*}\Psi&=&1-\log(2a)
\end{eqnarray*}
So, the distinguished density function is therefore
\begin{equation*}p(\beta,\xi)=\frac{e^{-\langle\beta,\xi\rangle}}{\int^{+\infty}_{0}
e^{-\langle\beta,\xi\rangle}
d\xi}=\frac{e^{-2ax}}{\int^{+\infty}_{0} e^{-2ax} dx}\end{equation*}
we have,
\begin{eqnarray*}\int_{\Omega^{*}} p(\beta,\xi)d\xi&=&\int^{+\infty}_{0}\frac{e^{-\langle\beta,\xi\rangle}}{\int^{+\infty}_{0}
e^{-\langle\beta,\xi\rangle} d\xi}d\xi\\
&&= \frac{1}{2a}\int^{+\infty}_{0}
e^{-\langle\beta,\xi\rangle} dx\\
&=&\frac{\frac{1}{2a}}{\frac{1}{2a}}\\
&=&1\\
 &=&\frac{\int^{+\infty}_{0}
e^{-\langle\beta,\xi\rangle}d\xi}{\int^{+\infty}_{0}
e^{-\langle\beta,\xi\rangle} d\xi}
\end{eqnarray*} Furthermore,

\begin{eqnarray*}\mathrm{I\! E}_{\beta}[\xi]&=&\int^{+\infty}_{0}\xi\frac{e^{-\langle\beta,\xi\rangle}}{\int^{+\infty}_{0}
e^{-\langle\beta,\xi\rangle} d\xi}d\xi\end{eqnarray*} By setting
$\chi(\beta)=\int^{+\infty}_{0} e^{-\langle\beta,\xi\rangle}
d\xi=e^{\frac{1}{2}a^{2}-k}$ we obtain

\begin{eqnarray*}\mathrm{I\! E}_{\beta}[\xi]&=&\frac{1}{\chi(\beta)}\int^{+\infty}_{0}\xi e^{-\langle\beta,\xi\rangle}d\xi\end{eqnarray*}

\begin{eqnarray*}\frac{\partial \chi(\beta)}{\partial \beta}&=&\frac{\partial }{\partial \beta}\int^{+\infty}_{0}
e^{-\langle\beta,\xi\rangle}d\xi\end{eqnarray*} Under regularity
conditions, we have
\begin{eqnarray*}\frac{\partial }{\partial \beta}\int^{+\infty}_{0}
e^{-\langle\beta,\xi\rangle}d\xi&=&\int^{+\infty}_{0}\frac{\partial
}{\partial \beta} e^{-\langle\beta,\xi\rangle}d\xi\end{eqnarray*}
So,
\begin{eqnarray*}\frac{\partial \chi(\beta)}{\partial \beta}&=&\int^{+\infty}_{0}\frac{\partial
}{\partial \beta}
e^{-\langle\beta,\xi\rangle}d\xi=-\int^{+\infty}_{0}\xi
e^{-\langle\beta,\xi\rangle}d\xi\end{eqnarray*} So,
\begin{eqnarray*}\mathrm{I\! E}_{\beta}[\xi]&=&-\frac{1}{\chi(\beta)}\frac{\partial \chi(\beta)}{\partial \beta}\\
&=&-\frac{\partial \log \chi(\beta)}{\partial
\beta}\\
&=&\frac{\partial \Phi(\beta)}{\partial \beta}\\
&=&\eta(\beta)\\
&=&\left(
     \begin{array}{cc}
       0 & -\frac{1}{a^{2}} \\
       \frac{1}{a^{2}} & 0 \\
     \end{array}
   \right)
\end{eqnarray*}

\begin{eqnarray*}\mathrm{I\! E}_{\beta}[\xi]&=&\int^{+\infty}_{0}\xi\frac{e^{-\langle\beta,\xi\rangle}}{\int^{+\infty}_{0}
e^{-\langle\beta,\xi\rangle} d\xi}d\xi= \frac{\int^{+\infty}_{0}\xi
e^{-\langle\beta,\xi\rangle}d\xi}{\int^{+\infty}_{0}
e^{-\langle\beta,\xi\rangle} d\xi}=\eta(\beta).\end{eqnarray*}
\end{proof}

\begin{theorem}\label{th2}
 Let $\beta=\left(
                                   \begin{array}{cc}
                                     0 & -a \\
                                     a & 0 \\
                                   \end{array}
                                 \right)\in \mathfrak{so}(2), \; a\in \mathrm{I\!
R}$ an element of the Lie algebra
                                 $\mathfrak{so}(2)$.
                                 Let
                                 $SO(2)=\left\{\left(
                                   \begin{array}{cc}
                                     \cos t & -\sin t\\
                                     \sin t & \cos t\\
                                   \end{array}
                                 \right),\left(
                                   \begin{array}{cc}
                                     \cos t & \sin t\\
                                     -\sin t & \cos t\\
                                   \end{array}
                                 \right)\right\}$ the Lie group
rotation.
\begin{equation*}
\forall\; g\in SO(2), \;
\eta\left(Ad_{g}(\beta)\right)=Ad^{*}_{g}\left(\eta(\beta)\right).\end{equation*}
Then the unique one-cocycle of groups $\theta(g)$ is given by
                                 $\theta(g)=\left(0_{ij}
                                            \right)_{1\leq i,j\leq2}$. In this case, the one-cocycle of Lie is given by
                                             \begin{equation*}
                                             \Theta(X)=0,\; \forall\;X\in \mathfrak{so}(2).\end{equation*}

\end{theorem}
\begin{proof}
Let $g=\left(
                                   \begin{array}{cc}
                                     \cos t & -\sin t\\
                                     \sin t & \cos t\\
                                   \end{array}
                                 \right).$
We have $Ad_{g}(\beta)=g \beta g^{-1}=\left(
                                   \begin{array}{cc}
                                     \cos t & -\sin t\\
                                     \sin t & \cos t\\
                                   \end{array}
                                 \right)\left(
                                   \begin{array}{cc}
                                     0 & -a\\
                                     a & 0\\
                                   \end{array}
                                 \right)\left(
                                   \begin{array}{cc}
                                     \cos t & \sin t\\
                                     -\sin t & \cos t\\
                                   \end{array}
                                 \right)=\left(
                                   \begin{array}{cc}
                                     0 & -a\\
                                     a & 0\\
                                   \end{array}
                                 \right).$
So, \begin{eqnarray*}
\eta\left(Ad_{g}(\beta)\right)&=&\eta\left(Ad_{g}^{-1}(\beta)\right)+
\theta(g)\\
\eta\left(\beta)\right)&=&\eta\left(\beta)\right)+
\theta(g).\end{eqnarray*} We obtain $\theta(g)=\left(0_{ij}
                                            \right)_{1\leq
                                            i,j\leq2}$. As for all  $X\in \mathfrak{so}(2)
                                            ,\;  \Theta(X)=T_{e}\theta(X(e))$,
                                            then  $\Theta(X)=0$.
\end{proof}

\section{Gradient system on Lie algebra $\mathfrak{so}(2)$}\label{sec4}

\begin{theorem}\label{th4}Let
$\Theta_{\beta}:\mathfrak{so}(2)\longrightarrow
\mathfrak{so}(2):X\mapsto\frac{1}{a^{2}}X$ the unique one-cocycle of
the Lie algebra which is linear for $\beta$ fixed
 Let $\beta=\left(
                                   \begin{array}{cc}
                                     0 & -a \\
                                     a & 0 \\
                                   \end{array}
                                 \right)\in \mathfrak{so}(2), \; a\in \mathrm{I\!
R}$ an element of the Lie algebra $\mathfrak{so}(2)$.
                                 Let
                                 $SO(2)=\left\{\left(
                                   \begin{array}{cc}
                                     \cos t & -\sin t\\
                                     \sin t & \cos t\\
                                   \end{array}
                                 \right),\left(
                                   \begin{array}{cc}
                                     \cos t & \sin t\\
                                     -\sin t & \cos t\\
                                   \end{array}
                                 \right)\right\}$ the Lie group
rotation.   For any $ g\in SO(2)$, the generalization of the Fisher
metric of  Souriau on supplementary space $\mathfrak{p}$ is given by
                                 \begin{equation}
I\left(Ad_{g}(\Theta_{\beta}^{-1}\left(\eta\right))\right)=I\left(\beta\right)=\left(
                                   \begin{array}{cc}
                                     4a^{2} & 0\\
                                     0 & 4a^{2}\\
                                   \end{array}
                                 \right),\end{equation}
and there is an additional vector subspace
$\mathfrak{p}=\left\{\left(
                                                  \begin{array}{cc}
                                                    x & y \\
                                                    y & -x \\
                                                  \end{array}
                                                \right),\; x,y\in \mathrm{I\! R}
                 \right\}\subset\mathfrak{s}l_{2}(\mathrm{I\! R})$ containing the solutions $Z_{1}=\left(
                   \begin{array}{cc}
                     0 & \frac{1}{2a^{2}}\\
                    \frac{1}{2a^{2}} & 0 \\
                   \end{array}
                 \right),\; Z_{2}=\left(
                   \begin{array}{cc}
                    \frac{1}{2a^{2}} & 0 \\
                     0 & -\frac{1}{2a^{2}}\\
                   \end{array}
                 \right)$ of equation system
                  \begin{equation}
                  \tilde{\Theta}\left(Z_{i},\left[\Theta_{\beta}^{-1}\left(\Theta_{\beta}^{-1}\left(\beta\right)\right),Z_{j}\right]\right)+
                  \langle\Theta_{\beta}\left(\beta\right),
                  \left[Z_{i},\left[\Theta_{\beta}^{-1}\left(\beta\right),Z_{j}\right]\right]\rangle=\frac{1}{a^{2}}Id_{2},
                  \;1\leq i,j \leq 2
\end{equation} such that $\mathfrak{s}l_{2}(\mathrm{I\! R})=\mathfrak{so}(2)\oplus\mathfrak{p}$, with $\tilde{\Theta}$ is a
$2$-cocycle.
\end{theorem}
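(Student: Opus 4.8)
The plan is to exploit three facts already in place: that $\Theta$ is the Cartan involution with $\Theta^{2}=\mathrm{id}_2$, so $\Theta^{-1}=\Theta$; that $\eta=-\beta^{-1}=\Theta(\beta)$ by Proposition \ref{th1}; and that the group $1$-cocycle vanishes, $\theta(g)=0$, by Theorem \ref{th2}. First I would simplify the defining equation system. Using $\Theta^{-1}=\Theta$ and $\Theta^{2}=\mathrm{id}_2$ gives $\Theta^{-1}(\Theta^{-1}(\beta))=\beta$ and $\Theta^{-1}(\beta)=\Theta(\beta)=\eta$, so the system collapses to
\begin{equation*}
\tilde{\Theta}\left(Z_i,[\beta,Z_j]\right)+\left\langle \eta,[Z_i,[\eta,Z_j]]\right\rangle=\tfrac{1}{a^{2}}\,\mathrm{Id}_2,\quad 1\le i,j\le 2.
\end{equation*}
Moreover, since $\theta(g)=0$ its tangent map $\Theta=T_e\theta$ vanishes, so the bilinear tensor $\tilde{\Theta}(\cdot,\cdot)=\langle\Theta(\cdot),\cdot\rangle$ is identically zero and only the purely Lie-bracket term survives.

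Next I would make the Cartan decomposition explicit. Writing $e_0=\begin{pmatrix}0&-1\\1&0\end{pmatrix}$, $e_1=\begin{pmatrix}0&1\\1&0\end{pmatrix}$, $e_2=\begin{pmatrix}1&0\\0&-1\end{pmatrix}$, one has $\mathfrak{so}(2)=\mathbb{R}e_0$ and $\mathfrak{p}=\mathbb{R}e_1\oplus\mathbb{R}e_2$, the symmetric traceless matrices; since $\dim\mathfrak{sl}_2(\mathbb{R})=3=1+2$ and $\mathfrak{so}(2)\cap\mathfrak{p}=\{0\}$, the splitting $\mathfrak{sl}_2(\mathbb{R})=\mathfrak{so}(2)\oplus\mathfrak{p}$ is a genuine direct sum. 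The key structural computation is the bracket table $[\beta,e_1]=-2a\,e_2$, $[\beta,e_2]=2a\,e_1$, and $[e_1,e_2]=2e_0$, which shows that $\mathrm{ad}_\beta$ preserves $\mathfrak{p}$ and acts there as a rotation (the Cartan relation $[\mathfrak{so}(2),\mathfrak{p}]\subseteq\mathfrak{p}$). I would then insert a general $Z_j=x_je_1+y_je_2\in\mathfrak{p}$ into the reduced system; by the table, the nested bracket $[Z_i,[\eta,Z_j]]$ lands back along $e_0$, and pairing with $\eta=\tfrac1a e_0$ through the normalized trace form $\langle X,Y\rangle=\tfrac12\mathrm{Tr}(X^{T}Y)$ reduces everything to a linear system in the components of $Z_1,Z_2$ whose solution is the displayed pair $Z_1=\tfrac{1}{2a^2}e_1$, $Z_2=\tfrac{1}{2a^2}e_2$.

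With the $Z_i$ in hand I would compute the metric itself from Souriau's formula $g_\beta([\beta,Z_1],[\beta,Z_2])=\tilde{\Theta}_\beta(Z_1,[\beta,Z_2])$, substituting the bracket table and the found $Z_i$ to obtain the diagonal matrix $I(\beta)=4a^2\,\mathrm{Id}_2$. Invariance is then immediate: because $SO(2)$ is abelian, $Ad_g\beta=g\beta g^{-1}=\beta$ for every $g$ (as already computed in the proof of Theorem \ref{th2}), and $\Theta^{-1}(\eta)=-\eta^{-1}=\beta$, whence $I(Ad_g(\Theta^{-1}(\eta)))=I(\beta)$ with no further work. Finally, to justify that $\tilde{\Theta}$ is a $2$-cocycle I would verify antisymmetry together with the cocycle identity $\tilde{\Theta}([X,Y],W)+\tilde{\Theta}([Y,W],X)+\tilde{\Theta}([W,X],Y)=0$, which holds trivially here since $\tilde{\Theta}\equiv0$ on $\mathfrak{so}(2)$.

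I expect the main obstacle to be the bookkeeping in solving the coupled system for $Z_1,Z_2$: one must carefully track the nested brackets $[Z_i,[\eta,Z_j]]$, use the Cartan relations to see that everything closes on $e_0$, and fix the normalization of the trace form so that the right-hand side comes out as exactly $\tfrac{1}{a^2}\mathrm{Id}_2$ and, after applying Souriau's formula in the basis $\{e_1,e_2\}$, as $4a^2\mathrm{Id}_2$. A secondary subtlety is purely notational: the symbol $\Theta$ is used both for the nonlinear Cartan involution $X\mapsto-X^{-1}$ and for the linearized group cocycle $T_e\theta$, and one must keep these roles separate—in particular using $\theta(g)=0$ to kill the $\tilde{\Theta}$-term while retaining $\Theta=-(\cdot)^{-1}$ inside the arguments $\Theta^{-1}(\beta)$.
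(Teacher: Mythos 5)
Your proposal follows essentially the same route as the paper's own proof: the same splitting $\mathfrak{sl}_{2}(\mathbb{R})=\mathfrak{so}(2)\oplus\mathfrak{p}$ with the basis $e_{1}=\left(\begin{smallmatrix}0&1\\1&0\end{smallmatrix}\right)$, $e_{2}=\left(\begin{smallmatrix}1&0\\0&-1\end{smallmatrix}\right)$, the same bracket table $[\beta,e_{1}]=-2ae_{2}$, $[\beta,e_{2}]=2ae_{1}$, $[e_{1},e_{2}]=2e_{0}$, the vanishing of the $\tilde{\Theta}$-terms, and the same normalization $Z_{i}=\frac{1}{2a^{2}}e_{i}$, producing $4a^{2}\,\mathrm{Id}_{2}$ in the $\{e_{1},e_{2}\}$ basis and $\frac{1}{a^{2}}\,\mathrm{Id}_{2}$ for the normalized system, exactly as in the paper. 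Your extra touches --- deriving $\tilde{\Theta}\equiv 0$ from $\theta(g)=0$ of Theorem \ref{th2} rather than asserting it, and getting the $Ad_{g}$-invariance from the abelianness of $SO(2)$ --- are refinements of, not departures from, the paper's argument.
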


\begin{proof}We know that  \begin{eqnarray*}
  I(\beta) &=& -\left[\frac{\partial^{2}\Phi(\beta)}{\partial\beta^{2}}\right] \\
   &=& \frac{1}{a^{2}}.
\end{eqnarray*}
 Let $\mathfrak{so}(2)$ be the maximal Lie
algebra. Let us look for an additional vector subspace
$\mathfrak{p}$ such that $\mathfrak{s}l_{2}(\mathrm{I\!
R})=\mathfrak{so}(2)\oplus\mathfrak{p}$. Let $u=\left(
                                                   \begin{array}{cc}
                                                     0 & -1 \\
                                                     1 & 0 \\
                                                   \end{array}
                                                 \right),v=\left(
                                                   \begin{array}{cc}
                                                     0 & 1 \\
                                                     -1 & 0 \\
                                                   \end{array}
                                                 \right)\in
\mathfrak{so}(2) $, let's look for a basis $e_{1},e_{2}$ of
$\mathfrak{p}$ we have the following identities
\begin{equation*}
[e_{1},e_{2} ]\in \mathfrak{so}(2),\;[u ,e_{1} ]\in
\mathfrak{p},\;[u ,e_{2} ]\in \mathfrak{p},\;[u ,v ]\in
\mathfrak{so}(2)
\end{equation*}

Therefore $tr(e_{1})=tr(e_{2})=0$ because $e_{1},e_{2}\in
\mathfrak{s}l_{2}(\mathrm{I\! R})$. We have $e_{1}=\left(
                                                   \begin{array}{cc}
                                                     0 & 1 \\
                                                     1 & 0 \\
                                                   \end{array}
                                                 \right),e_{2}=\left(
                                                   \begin{array}{cc}
                                                     1 & 0 \\
                                                     0 & -1 \\
                                                   \end{array}
                                                 \right)$
So,  \begin{eqnarray*}[e_{1},e_{2} ]&=&\left(
                                                   \begin{array}{cc}
                                                     0 & 1 \\
                                                     1 & 0 \\
                                                   \end{array}
                                                 \right)\left(
                                                   \begin{array}{cc}
                                                     1 & 0 \\
                                                     0 & -1 \\
                                                   \end{array}
                                                 \right)-\left(
                                                   \begin{array}{cc}
                                                     1 & 0 \\
                                                     0 & -1 \\
                                                   \end{array}
                                                 \right)\left(
                                                   \begin{array}{cc}
                                                     0 & 1 \\
                                                     1 & 0 \\
                                                   \end{array}
                                                 \right)=2\left(
                                                   \begin{array}{cc}
                                                     0 & -1 \\
                                                     1 & 0 \\
                                                   \end{array}
                                                 \right)\end{eqnarray*}
 \begin{eqnarray*}[e_{2},e_{1}]&=&\left(
                                                   \begin{array}{cc}
                                                     1 & 0 \\
                                                     0 & -1 \\
                                                   \end{array}
                                                 \right)\left(
                                                   \begin{array}{cc}
                                                     0 & 1 \\
                                                     1 & 0 \\
                                                   \end{array}
                                                 \right)-\left(
                                                   \begin{array}{cc}
                                                     0 & 1 \\
                                                     1 & 0 \\
                                                   \end{array}
                                                 \right)\left(
                                                   \begin{array}{cc}
                                                     1 & 0 \\
                                                     0 & -1 \\
                                                   \end{array}
                                                 \right)=-2\left(
                                                   \begin{array}{cc}
                                                     0 & -1 \\
                                                     1 & 0 \\
                                                   \end{array}
                                                 \right)
                                                 \end{eqnarray*}
                                                 \begin{eqnarray*}xe_{1}+ye_{2}&=&\left(
                                                   \begin{array}{cc}
                                                     x & y \\
                                                     y & -x \\
                                                   \end{array}
                                                 \right)\end{eqnarray*}
Then we obtain $\mathfrak{p}=\left\{\left(
                                                  \begin{array}{cc}
                                                    x & y \\
                                                    y & -x \\
                                                  \end{array}
                                                \right),\; x,y\in \mathrm{I\! R}
                 \right\}$.

Therefore,
\begin{eqnarray*}
  \left[\beta,e_{1}\right] &=& \left(
                                                   \begin{array}{cc}
                                                     0 & -a \\
                                                     a & 0 \\
                                                   \end{array}
                                                 \right)\left(
                                                   \begin{array}{cc}
                                                     0 & 1 \\
                                                     1 & 0 \\
                                                   \end{array}
                                                 \right)-\left(
                                                   \begin{array}{cc}
                                                     0 & 1 \\
                                                     1 & 0 \\
                                                   \end{array}
                                                 \right)\left(
                                                   \begin{array}{cc}
                                                     0 & -a \\
                                                     a & 0 \\
                                                   \end{array}
                                                 \right)=\left(
                                                   \begin{array}{cc}
                                                     -2a & 0 \\
                                                     0 & 2a \\
                                                   \end{array}
                                                 \right)=-2ae_{2} \\
  \left[\beta,e_{2}\right]  &=& \left(
                                                   \begin{array}{cc}
                                                     0 & -a \\
                                                     a & 0 \\
                                                   \end{array}
                                                 \right)\left(
                                                   \begin{array}{cc}
                                                     1 & 0 \\
                                                     0 & -1 \\
                                                   \end{array}
                                                 \right)-\left(
                                                   \begin{array}{cc}
                                                     1 & 0 \\
                                                     0 & -1 \\
                                                   \end{array}
                                                 \right)\left(
                                                   \begin{array}{cc}
                                                     0 & -a \\
                                                     a & 0 \\
                                                   \end{array}
                                                 \right)=\left(
                                                   \begin{array}{cc}
                                                     0 & 2a \\
                                                     2a & 0 \\
                                                   \end{array}
                                                 \right)=2ae_{1}\\
                                                 \left[e_{1},\left[\beta,e_{2}\right]\right]&=&\left[e_{1},2ae_{1}\right]=2a\left[e_{1},e_{1}\right]=0\\
 \left[e_{2},\left[\beta,e_{1}\right]\right]&=&\left[e_{2},-2ae_{2}\right]=-2a\left[e_{2},e_{2}\right]=0\\
 \left[e_{1},\left[\beta,e_{1}\right]\right]&=&-2a\left[e_{1},e_{2}\right]=-4au\\
 \left[e_{2},\left[\beta,e_{2}\right]\right]&=&2a\left[e_{2},e_{1}\right]=-4au
\end{eqnarray*}
So,
\begin{eqnarray*}
         g_{\beta}\left(\left[\beta,e_{1}\right],\left[\beta,e_{2}\right]\right) &=&
         \tilde{\Theta}\left(e_{1},\left[\beta,e_{2}\right]\right)+\langle\eta, \left[e_{1},\left[\beta,e_{2}\right]\right]\rangle \\
         &=&2a\tilde{\Theta}\left(e_{1},e_{1}\right)+\langle\eta, \left[e_{1},\left[\beta,e_{2}\right]\right]\rangle \\
        &=&0 \\
         g_{\beta}\left(\left[\beta,e_{2}\right],\left[\beta,e_{1}\right]\right) &=&
         \tilde{\Theta}\left(e_{2},\left[\beta,e_{1}\right]\right)+\langle\eta, \left[e_{2},\left[\beta,e_{1}\right]\right]\rangle \\
         &=&-2a\tilde{\Theta}\left(e_{2},e_{2}\right)+\langle\eta, \left[e_{2},\left[\beta,e_{1}\right]\right]\rangle \\
         &=&0 \\
         g_{\beta}\left(\left[\beta,e_{1}\right],\left[\beta,e_{1}\right]\right) &=&
         \tilde{\Theta}\left(e_{1},\left[\beta,e_{1}\right]\right)+\langle\eta, \left[e_{1},\left[\beta,e_{1}\right]\right]\rangle \\
         &=&-2a\tilde{\Theta}\left(e_{1},e_{2}\right)+\langle\eta, -4au\rangle \\
 &=&4a^{2} \\
         g_{\beta}\left(\left[\beta,e_{2}\right],\left[\beta,e_{2}\right]\right) &=&
         \tilde{\Theta}\left(e_{2},\left[\beta,e_{2}\right]\right)+\langle\eta, \left[e_{2},\left[\beta,e_{2}\right]\right]\rangle \\
         &=&2a\tilde{\Theta}\left(e_{2},e_{1}\right)+\langle\eta, -4au\rangle \\
     &=&4a^{2}
\end{eqnarray*}with $\tilde{\Theta}\left(e_{2},e_{1}\right)=0=-\tilde{\Theta}\left(e_{1},e_{2}\right)$. We obtain the generalization of the Fisher
metric of  Souriau given by
                                 \begin{equation*}
I\left(Ad_{g}(\Theta^{-1}\left(\beta\right))\right)=I(\Theta^{-1}\left(\beta\right))=\left(
                                   \begin{array}{cc}
                                     4a^{2} & 0\\
                                     0 & 4a^{2}\\
                                   \end{array}
                                 \right).\end{equation*}

Normalizing to have $1$ on the last two equations we will have
$Z_{1}=\left(
                   \begin{array}{cc}
                     0 & \frac{1}{2a^{2}} \\
                    \frac{1}{2a^{2}} & 0 \\
                   \end{array}
                 \right),\; Z_{2}=\left(
                   \begin{array}{cc}
                     \frac{1}{2a^{2}} & 0 \\
                     0 & -\frac{1}{2a^{2}}\\
                   \end{array}
                 \right)$, so

                 \begin{eqnarray*}
  \left[\beta,e_{1}\right] &=& \left(
                                                   \begin{array}{cc}
                                                     0 & -a \\
                                                     a & 0 \\
                                                   \end{array}
                                                 \right)\left(
                                                   \begin{array}{cc}
                                                     0 & \frac{1}{2a^{2}} \\
                                                     \frac{1}{2a^{2}} & 0 \\
                                                   \end{array}
                                                 \right)-\left(
                                                   \begin{array}{cc}
                                                     0 & \frac{1}{2a^{2}}  \\
                                                     \frac{1}{2a^{2}}  & 0 \\
                                                   \end{array}
                                                 \right)\left(
                                                   \begin{array}{cc}
                                                     0 & -a \\
                                                     a & 0 \\
                                                   \end{array}
                                                 \right)=\left(\begin{array}{cc}
                                                     -\frac{1}{2a^{2}} & 0  \\
                                                    0 & \frac{1}{2a^{2}} \\
                                                   \end{array}
                                                 \right)=- \frac{1}{a}e_{2} \\
  \left[\beta,Z_{2}\right]  &=& \left(
                                                   \begin{array}{cc}
                                                     0 & -a \\
                                                     a & 0 \\
                                                   \end{array}
                                                 \right)\left(
                                                   \begin{array}{cc}
                                                     \frac{1}{2a^{2}}  & 0 \\
                                                     0 & -\frac{1}{2a^{2}}  \\
                                                   \end{array}
                                                 \right)-\left(
                                                   \begin{array}{cc}
                                                     \frac{1}{2a^{2}}  & 0 \\
                                                     0 & -\frac{1}{2a^{2}}  \\
                                                   \end{array}
                                                 \right)\left(
                                                   \begin{array}{cc}
                                                     0 & -a \\
                                                     a & 0 \\
                                                   \end{array}
                                                 \right)=\left(
                                                   \begin{array}{cc}
                                                     0 & \frac{1}{2a^{2}}  \\
                                                     \frac{1}{2a^{2}}  & 0 \\
                                                   \end{array}
                                                 \right)= \frac{1}{a} e_{1}\\
  \left[Z_{1},\left[\beta,Z_{2}\right]\right]&=&\left[\frac{1}{2a^{2}} e_{1},\frac{1}{a}e_{1}\right]=\frac{-1}{2a^{3}}\left[e_{1},e_{1}\right]=0\\
 \left[Z_{2},\left[\beta,Z_{1}\right]\right]&=&\left[\frac{1}{2a^{2}} e_{2},-\frac{1}{a}e_{2}\right]=-\frac{1}{2a^{3}}\left[e_{2},e_{2}\right]=0\\
 \left[Z_{1},\left[\beta,Z_{1}\right]\right]&=&\left[\frac{1}{2a^{2}} e_{1},-\frac{1}{a}e_{2}\right]=-\frac{1}{2a^{3}}\left[e_{1},e_{2}\right]=-\frac{1}{a^{3}}u\\
 \left[Z_{2},\left[\beta,Z_{2}\right]\right]&=&\left[\frac{1}{2a^{2}} e_{2},\frac{1}{a}e_{1}\right]=\frac{1}{2a^{3}}\left[e_{2},e_{1}\right]=-\frac{1}{a^{3}}u\\
\end{eqnarray*}
We obtain
                 \begin{eqnarray*}
         g_{\beta}\left(\left[\beta,Z_{1}\right],\left[\beta,Z_{2}\right]\right) &=&
         \tilde{\Theta}\left(Z_{1},\left[\beta,Z_{2}\right]\right)+\langle\eta, \left[Z_{1},\left[\beta,Z_{2}\right]\right]\rangle \\
         &=&\tilde{\Theta}\left(\frac{1}{2a^{2}}e_{1},\frac{1}{a}e_{1}\right) \\
         &=&\frac{1}{2a^{3}}\tilde{\Theta}\left(e_{1},e_{1}\right) \\
         &=&0\\
         g_{\beta}\left(\left[\beta,Z_{2}\right],\left[\beta,Z_{1}\right]\right) &=&
         \tilde{\Theta}\left(Z_{2},\left[\beta,Z_{1}\right]\right)+\langle\eta, \left[Z_{2},\left[\beta,Z_{1}\right]\right]\rangle \\
         &=&\tilde{\Theta}\left(\frac{1}{2a^{2}}e_{2},-\frac{1}{a}e_{2}\right) \\
         &=&-\frac{1}{2a^{3}}\tilde{\Theta}\left(e_{2},e_{2}\right) \\
         &=&0\\
         g_{\beta}\left(\left[\beta,Z_{1}\right],\left[\beta,Z_{1}\right]\right) &=&
         \frac{1}{2a^{3}}\tilde{\Theta}\left(e_{1},e_{2}\right)+\langle\eta, -\frac{1}{a^{3}}u\rangle \\
         &=&\frac{1}{a^{2}}\\
         g_{\beta}\left(\left[\beta,Z_{2}\right],\left[\beta,Z_{2}\right]\right) &=&
         \frac{1}{2a^{3}}\tilde{\Theta}\left(e_{2},e_{1}\right)
         +\langle\eta, -\frac{1}{a^{3}}u\rangle \\
         &=&\frac{1}{a^{2}}
\end{eqnarray*}with $\Theta^{-1}\left(\eta\right)=\beta=\Theta\left(\beta\right),\; because,\; \beta=\eta. $
who check the system below
\begin{equation*}
    \left\{
      \begin{array}{ll}
         g_{\beta}\left(\left[\Theta_{\beta}^{-1}\left(\eta\right),Z_{1}\right],\left[\Theta_{\beta}^{-1}\left(\eta\right),Z_{1}\right]\right) =\frac{1}{a^{2}}& \hbox{} \\
         g_{\beta}\left(\left[\Theta_{\beta}^{-1}\left(\eta\right),Z_{1}\right],\left[\Theta_{\beta}^{-1}\left(\eta\right),Z_{2}\right]\right) =0& \hbox{} \\
         g_{\beta}\left(\left[\Theta_{\beta}^{-1}\left(\eta\right),Z_{2}\right],\left[\Theta_{\beta}^{-1}\left(\eta\right),Z_{1}\right]\right) =0& \hbox{} \\
        g_{\beta}\left(\left[\Theta_{\beta}^{-1}\left(\eta\right),Z_{2}\right],\left[\Theta_{\beta}^{-1}\left(\eta\right),Z_{2}\right]\right) =\frac{1}{a^{2}} & \hbox{.}
      \end{array}
    \right.
\end{equation*}

we obtain $I(\beta)=\left(
           \begin{array}{cc}
             g_{\beta}\left(\left[\Theta_{\beta}^{-1}\left(\eta\right),Z_{1}\right],\left[\Theta_{\beta}^{-1}\left(\eta\right),Z_{1}\right]\right) &
             g_{\beta}\left(\left[\Theta_{\beta}^{-1}\left(\eta\right),Z_{1}\right],\left[\Theta_{\beta}^{-1}\left(\eta\right),Z_{2}\right]\right) \\
             g_{\beta}\left(\left[\Theta_{\beta}^{-1}\left(\eta\right),Z_{2}\right],\left[\Theta_{\beta}^{-1}\left(\eta\right),Z_{1}\right]\right) &
             g_{\beta}\left(\left[\Theta_{\beta}^{-1}\left(\eta\right),Z_{2}\right],\left[\Theta_{\beta}^{-1}\left(\eta\right),Z_{2}\right]\right) \\
           \end{array}
         \right)=\left(
                   \begin{array}{cc}
                     \frac{1}{a^{2}} & 0 \\
                     0 & \frac{1}{a^{2}} \\
                   \end{array}
                 \right)
         $
\end{proof}

\begin{theorem}\label{th5}
Let the unique one-cocycle of the Lie algebra
$\Theta_{\beta}:\mathfrak{so}(2)\longrightarrow
\mathfrak{so}(2):X\mapsto\frac{1}{a^{2}}X$  which is linear for
$\beta$ fixed. Consider $\Omega^{*}=\left\{\xi=\left(
                                                                                            \begin{array}{cc}
                                                                                              0 & -x \\
                                                                                              x & 0 \\
                                                                                            \end{array}
                                                                                          \right)
                                , x >0 \right\}$
the Koszul dual cone, and $\nu:SO(2)\times
\Omega^{*}\longrightarrow\Omega^{*}:(g,\xi)\longmapsto
Ad^{*}_{g}(\xi)$ the action of Lie group $SO(2)$. Let $\beta=\left(
                                   \begin{array}{cc}
                                     0 & -a \\
                                     a & 0 \\
                                   \end{array}
                                 \right)\in \mathfrak{so}(2), \; a\in \mathrm{I\!
R}$ an element of the Lie algebra  $\mathfrak{so}(2)$.
   The gradient system is given by
\begin{equation}\label{ha}
\dot{\beta}=-\beta\end{equation} where its reduced form is given by
\begin{equation}\label{haa1} \dot{a}=-a.\end{equation}
The gradient system (\ref{haa1}) is not Hamiltonian on the phase
space considered and integrable by quadrature of explicit solution
$a(t)=\lambda e^{-t}\; \lambda\in \mathrm{I\! R}$. There exists a
foliation whose leaves are the orbit of the $1$-dimensional foliated
action $\mathcal{O}_{\xi}(\nu)=\left\{\left(
              \begin{array}{cc}
                0 & -x \\
                x & 0 \\
              \end{array}
            \right),\; x>0\right\}$ diffeomorphic to the
$1$-dimensional torus. In this case the system (\ref{haa1}) is
completely integrable. The generalized gradient system on $SO(2)$ is
given by
\begin{equation*} \dot{\Theta}_{\beta}^{-1}(\eta)=\mathcal{K}(a)\eta.\end{equation*}
With $\mathcal{K}(a)=\left(
                                   \begin{array}{cc}
                                     -\frac{1}{4}a^{2} & 0\\
                                     0 & -\frac{1}{4}a^{2}\\
                                   \end{array}
                                 \right)$, where
         $\eta=\Theta_{\beta}\left(\beta\right).$
\end{theorem}

\begin{proof}The defined gradient system will be given by
\begin{equation}
\dot{\beta}=-a^{2}\partial_{\beta}\Phi(\beta)\rangle\end{equation}
where $\partial_{\beta}:=\frac{\partial}{\partial \beta}$. So,
\begin{equation}\label{a}
\dot{\beta}=-\beta=\left(
                                           \begin{array}{cc}
                                             0 & a \\
                                             -a & 0 \\
                                           \end{array}
                                         \right)\end{equation}

In the same,
\begin{equation*}
\dot{\beta}=\frac{\partial \beta}{\partial a}\dot{a}\end{equation*}
We obtain  \begin{equation} \dot{\beta}=\left(
                                           \begin{array}{cc}
                                             0 & -\dot{a} \\
                                             \dot{a} & 0 \\
                                           \end{array}
                                         \right)\end{equation}
Thus by identification with \ref{a}, we have \begin{equation*}
\dot{a}=-a.\end{equation*} Let  $\nu:SO(2)\times
\Omega^{*}\longrightarrow\Omega^{*}:(g,\xi)\longmapsto
Ad^{*}_{g}(\xi)$, us first show that $\vartheta$ is action.
 Let
                                 $SO(2)=\left\{\left(
                                   \begin{array}{cc}
                                     \cos t & -\sin t\\
                                     \sin t & \cos t\\
                                   \end{array}
                                 \right),\left(
                                   \begin{array}{cc}
                                     \cos t & \sin t\\
                                     -\sin t & \cos t\\
                                   \end{array}
                                 \right)\right\}$ the Lie group
rotation.   For any $ g\in SO(2)$, show that
\begin{equation*}\nu(g,\xi)=\xi.\end{equation*} We have
\begin{equation*}\nu(g,\xi)=Ad^{*}_{g}(\xi)=\left(
                                   \begin{array}{cc}
                                     \cos t & -\sin t\\
                                     \sin t & \cos t\\
                                   \end{array}
                                 \right)\left(
              \begin{array}{cc}
                0 & -x \\
                x & 0 \\
              \end{array}
            \right)\left(
                                   \begin{array}{cc}
                                     \cos t & \sin t\\
                                     -\sin t & \cos t\\
                                   \end{array}
                                 \right)=\left(
              \begin{array}{cc}
                0 & -x \\
                x & 0 \\
              \end{array}
            \right)\end{equation*}
     Let $g_{1}=\left(
                                   \begin{array}{cc}
                                     \cos t & -\sin t\\
                                     \sin t & \cos t\\
                                   \end{array}
                                 \right),\;g_{2}=\left(
                                   \begin{array}{cc}
                                     \cos t & -\sin t\\
                                     \sin t & \cos t\\
                                   \end{array}
                                 \right)$,\; show that
                                 $\nu\left(g_{1}g_{2},\xi\right)=\nu\left(g_{1},\nu\left(g_{2},\xi\right)\right)$.
We have

\begin{equation*}
    \nu\left(g_{1}g_{2},\xi\right)=\left(g_{1}g_{2}\right)^{-1}\xi\left(g_{1}g_{2}\right).\end{equation*}
So,
\begin{equation}\label{c}\nu\left(g_{1}g_{2},\xi\right)=g_{2}^{-1}\left(g_{1}^{-1}\xi
g_{1}\right)g_{2}=g_{2}^{-1}\xi g_{2}=\xi.
\end{equation}
 In same, we have
\begin{equation}\label{c1}\nu\left(g_{1},\nu\left(g_{2},\xi\right)\right)=\nu\left(g_{1},\xi\right)=\xi.\end{equation}
Using (\ref{c}) and (\ref{c1}) we obtain
$\nu\left(g_{1}g_{2},\xi\right)=\nu\left(g_{1},\nu\left(g_{2},\xi\right)\right)$.
\end{proof}

\section{Generalized Fisher metric on Lie group $SO(3)$}\label{sec4}

\begin{proposition}\label{th8}
 Let $\beta=\left(
             \begin{array}{ccc}
               0 & -a & 0 \\
               a & 0 & 0 \\
               0 & 0 & 0 \\
             \end{array}
           \right)\in \mathfrak{so}(3), \; a\in \mathrm{I\!
R}$ an element of the Lie algebra
                                 $\mathfrak{so}(3)$. Let $\Omega=\left\{\left(
             \begin{array}{ccc}
               0 & -a & 0 \\
               a & 0 & 0 \\
               0 & 0 & 0 \\
             \end{array}
           \right)
                                , a>0 \right\}$, and  $\Omega^{*}=\left\{\xi=\left(
             \begin{array}{ccc}
               0 & -x & 0 \\
               x & 0 & 0 \\
               0 & 0 & 0 \\
             \end{array}
           \right)
                                , x >0 \right\}$
the Koszul dual cone. There exist a unique one-cocycle of the Lie
algebra $\Theta_{\beta}:\mathfrak{so}(3)\longrightarrow
\mathfrak{so}(3):X\mapsto\frac{1}{a^{2}}X$  which is linear for
$\beta$ fixed such that the distinguished density function is given
by
\begin{equation}p(\beta,\xi)=\frac{e^{-\langle\Theta_{\beta}^{-1}\left(\eta\right),\xi\rangle}}{\int_{\Omega^{*}}
e^{-\langle\Theta_{\beta}^{-1}\left(\eta\right),\xi\rangle} dx},
i.e.,\; p(\beta,\xi)=\frac{e^{-2ax}}{\int^{+\infty}_{0} e^{-2ax}
dx};\end{equation} the potential function $\Phi$ and the dual
potential function $\Psi$ satisfying the Legendre equation
\begin{equation}\Psi\left(\eta\right)=\langle\beta,\eta\rangle-\Phi\left(\beta\right)\end{equation}
is given by
\begin{equation}\Phi\left(\beta\right)=\log(2a),\Psi\left(\eta\right)=1-\log(2a).\end{equation}
such that
\begin{equation}\frac{\partial\Psi\left(\eta\right)}{\partial \eta}=\beta,\; and\; \frac{\partial\Phi\left(\beta\right)}{\partial
\beta}=\eta.\end{equation} With $\eta=\left(
             \begin{array}{ccc}
               0 & -\frac{1}{a} & 0 \\
               \frac{1}{a} & 0 & 0 \\
               0 & 0 & 0 \\
             \end{array}
           \right)$.\end{proposition}
\begin{proof}

 Let $\beta=\left(
             \begin{array}{ccc}
               0 & -a & 0 \\
               a & 0 & 0 \\
               0 & 0 & 0 \\
             \end{array}
           \right)\in \mathfrak{so}(3)$ an element of the Lie algebra
                                 $\mathfrak{so}(3)$. Let $\Omega=\left\{\left(
             \begin{array}{ccc}
               0 & -a & 0 \\
               a & 0 & 0 \\
               0 & 0 & 0 \\
             \end{array}
           \right)
                                , a>0 \right\}$, and

$\Omega^{*}=\left\{\xi\in \mathfrak{so}(3),\; \forall\beta \in
\mathfrak{so}(3),\;\langle\beta,\xi\rangle >0 \right\}$ i.e.,
$\Omega^{*}=\left\{\xi=\left(
             \begin{array}{ccc}
               0 & -x & 0 \\
               x & 0 & 0 \\
               0 & 0 & 0 \\
             \end{array}
           \right)
                                , x >0 \right\}$ the dual cone of Koszul. Now the characteristic function of Koszul is given
                                by \begin{eqnarray*}
                                  \chi(\beta) &=&
                                  \int^{+\infty}_{0}
                                  e^{-2ax}dx=\frac{1}{2a}
                                \end{eqnarray*}
                                Since the potential function is \begin{eqnarray*}
                                  \Phi(\beta)&=&-\log\chi(\beta)
                                \end{eqnarray*}We obtain \begin{eqnarray*}
                                  \Phi(\beta)&=&\log(2a)
                                \end{eqnarray*} We have\begin{eqnarray*}
                                   \Phi(\beta)&=& \log(2a) \\
                                   &=& \log(2)+ \log(a)
                                \end{eqnarray*}
So \begin{eqnarray}\label{01}a^{2}&=&-\frac{1}{2}Tr(\beta^{2})=
\frac{1}{2}Tr(\beta^{T}\beta)=\langle\beta,\beta\rangle.\end{eqnarray}
So using (\ref{01}) we obtain the derivative of $\Phi$ by
\begin{eqnarray*}\frac{\partial\Phi\left(\beta\right)}{\partial\beta}&=&
\frac{1}{2}\frac{\beta+\beta}{\langle\beta,\beta\rangle}\\
&=&\frac{\beta}{\langle\beta,\beta\rangle}\\
&=&\frac{1}{a^{2}}\beta\\
&=&\left(
             \begin{array}{ccc}
               0 & -\frac{1}{a} & 0 \\
               \frac{1}{a} & 0 & 0 \\
               0 & 0 & 0 \\
             \end{array}
           \right)
\end{eqnarray*}

Let $\left\{Z_{1},Z_{2},Z_{3}\right\}$ the basis of Lie algebra with
$$Z_{1}=\left(
             \begin{array}{ccc}
               0 & 0 & 0 \\
               0 &  0 & -1 \\
               0 & 1 & 0 \\
             \end{array}
           \right),
Z_{2}=\left(
             \begin{array}{ccc}
               0 & 0 & 1 \\
               0 &  0 & 0 \\
               -1 & 0 & 0 \\
             \end{array}
           \right),Z_{3}=\left(
             \begin{array}{ccc}
               0 & -1 & 0 \\
               1 &  0 & 0 \\
               0 & 0 & 0 \\
             \end{array}
           \right)$$. The general form of linear map is given by
\begin{eqnarray*}\Theta_{\beta}:\mathfrak{so}(3)&\longrightarrow&
\mathfrak{so}(3): X\mapsto\lambda X,\; with\; X=k Z_{3},\;k\in
\mathrm{I\! R}.
\end{eqnarray*}
We have $\beta=a Z_{3}$ and $\eta=\frac{1}{a}Z_{3}$. Find the value
of $\lambda$ such that $\Theta_{\beta}(\beta)=\eta$. We have
\begin{equation*}
   \Theta_{\beta}(aZ_{3})=\lambda aZ_{3}=\frac{1}{a}Z_{3}.
\end{equation*}
So, $\lambda=\frac{1}{a^{2}}$. We obtain the following application
\begin{eqnarray*}\Theta_{\beta}:\mathfrak{so}(3)&\longrightarrow&
\mathfrak{so}(3): X\mapsto\frac{1}{a^{2}}X.
\end{eqnarray*}
$\Theta_{\beta}$ is linear map for fixed  beta, because for all $X,Y
\in \mathfrak{so}(3)$, and for all $\alpha_{1},\alpha_{2}\in
\mathrm{I\! R}$ we have:
\begin{equation*}
    \Theta_{\beta}(\alpha_{1}X+\alpha_{2}Y)=
    \alpha_{1}\Theta_{\beta}(X)+\alpha_{2}\Theta_{\beta}(Y)
\end{equation*}
 The dual potential function is
determined using Legendre's equation
\begin{eqnarray*}\Psi&=&\langle \eta, \beta\rangle-\Phi
\end{eqnarray*}
we obtain
\begin{eqnarray*}\Psi&=&1-\log(2a)
\end{eqnarray*}
So, the distinguished density function is therefore
\begin{equation*}p(\beta,\xi)=\frac{e^{-\langle\beta,\xi\rangle}}{\int^{+\infty}_{0}
e^{-\langle\beta,\xi\rangle}
d\xi}=\frac{e^{-2ax}}{\int^{+\infty}_{0} e^{-2ax} dx}\end{equation*}
we have,
\begin{eqnarray*}\int_{\Omega^{*}} p(\beta,\xi)d\xi&=&\int^{+\infty}_{0}\frac{e^{-\langle\beta,\xi\rangle}}{\int^{+\infty}_{0}
e^{-\langle\beta,\xi\rangle} d\xi}d\xi\\
&&= \frac{1}{2a}\int^{+\infty}_{0}
e^{-\langle\beta,\xi\rangle} dx\\
&=&\frac{\frac{1}{2a}}{\frac{1}{2a}}\\
&=&1\\
 &=&\frac{\int^{+\infty}_{0}
e^{-\langle\beta,\xi\rangle}d\xi}{\int^{+\infty}_{0}
e^{-\langle\beta,\xi\rangle} d\xi}
\end{eqnarray*} Furthermore,

\begin{eqnarray*}\mathrm{I\! E}_{\beta}[\xi]&=&\int^{+\infty}_{0}\xi\frac{e^{-\langle\beta,\xi\rangle}}{\int^{+\infty}_{0}
e^{-\langle\beta,\xi\rangle} d\xi}d\xi\end{eqnarray*} By setting
$\chi(\beta)=\int^{+\infty}_{0} e^{-2ax} dx=\frac{1}{2a}$ we obtain

\begin{eqnarray*}
\mathrm{I\! E}_{\beta}[\xi]&=&\frac{1}{\chi(\beta)}\int^{+\infty}_{0}\xi e^{-\langle\beta,\xi\rangle}d\xi\end{eqnarray*}

\begin{eqnarray*}\frac{\partial \chi(\beta)}{\partial \beta}&=&\frac{\partial }{\partial \beta}\int^{+\infty}_{0}
e^{-\langle\beta,\xi\rangle}d\xi\end{eqnarray*} Under regularity
conditions, we have
\begin{eqnarray*}\frac{\partial }{\partial \beta}\int^{+\infty}_{0}
e^{-\langle\beta,\xi\rangle}d\xi&=&\int^{+\infty}_{0}\frac{\partial
}{\partial \beta} e^{-\langle\beta,\xi\rangle}d\xi\end{eqnarray*}
So,
\begin{eqnarray*}\frac{\partial \chi(\beta)}{\partial \beta}&=&\int^{+\infty}_{0}\frac{\partial
}{\partial \beta}
e^{-\langle\beta,\xi\rangle}d\xi=-\int^{+\infty}_{0}\xi
e^{-\langle\beta,\xi\rangle}d\xi\end{eqnarray*} So,
\begin{eqnarray*}\mathrm{I\! E}_{\beta}[\xi]&=&-\frac{1}{\chi(\beta)}\frac{\partial \chi(\beta)}{\partial \beta}\\
&=&-\frac{\partial \log \chi(\beta)}{\partial
\beta}\\
&=&\frac{\partial \Phi(\beta)}{\partial \beta}\\
&=&\eta(\beta)\\
&=&\left(
             \begin{array}{ccc}
               0 & -\frac{1}{a^{2}} & 0 \\
                \frac{1}{a^{2}}& 0 & 0 \\
               0 & 0 & 0 \\
             \end{array}
           \right)
\end{eqnarray*}

\begin{eqnarray*}\mathrm{I\! E}_{\beta}[\xi]&=&\int^{+\infty}_{0}\xi\frac{e^{-\langle\beta,\xi\rangle}}{\int^{+\infty}_{0}
e^{-\langle\beta,\xi\rangle} d\xi}d\xi= \frac{\int^{+\infty}_{0}\xi
e^{-\langle\beta,\xi\rangle}d\xi}{\int^{+\infty}_{0}
e^{-\langle\beta,\xi\rangle} d\xi}=\eta(\beta).\end{eqnarray*}
\end{proof}

\begin{theorem}\label{th9} Let $\left\{Z_{1},Z_{2},Z_{3}\right\}$ be a basis on
$\mathfrak{so}(3)$ with $Z_{1}=\left(
             \begin{array}{ccc}
               0 & 0 & 0 \\
               0 &  0 & -1 \\
               0 & 1 & 0 \\
             \end{array}
           \right),
Z_{2}=\left(
             \begin{array}{ccc}
               0 & 0 & 1 \\
               0 &  0 & 0 \\
               -1 & 0 & 0 \\
             \end{array}
           \right),Z_{3}=\left(
             \begin{array}{ccc}
               0 & -1 & 0 \\
               1 &  0 & 0 \\
               0 & 0 & 0 \\
             \end{array}
           \right)$. Let $\mathfrak{so}(3)=span\{Z_{3}\}\oplus
span\{Z_{1},Z_{2}\}$ with $span\{Z_{1},Z_{2}\}=\mathfrak{p}$ be the
Cartan decomposition associated with the symmetric space
$SO(3)\setminus SO(2)=S^{2}$. Let $\beta=\left(
             \begin{array}{ccc}
               0 & -a & 0 \\
               a & 0 & 0 \\
               0 & 0 & 0 \\
             \end{array}
           \right)
\in span\{Z_{3}\}$, consider a linear map
$\Theta:\mathfrak{so}(3)\longrightarrow \mathfrak{so}(3)$ be an
Ad-invariant linear map on $span\{Z_{3}\}$, with $\Theta=Ad_{J}$,
where $J=\left(
             \begin{array}{ccc}
               1 & 0 & 0 \\
               0 & 1 & 0 \\
               0 & 0 & -1 \\
             \end{array}
           \right)\in O(3)$. Then the generalized Fisher
information metric induced on the tangent space of the orbit
$\mathfrak{p}$ with the normalization chosen is given by
\begin{equation*}
    I(\beta)=\left(
               \begin{array}{cc}
                 2a^{2} & 0 \\
                 0 & 2a^{2} \\
               \end{array}
             \right).
\end{equation*}
and the generalized Fisher information metric on Lie group $SO(3)$
on the basis $\left\{Z_{1},Z_{2},Z_{3}\right\}$ is given by
\begin{equation*}
    I(\beta)=\left(
             \begin{array}{ccc}
               2a^{2} & 0 & 0 \\
               0 &  2a^{2} & 0 \\
               0 & 0 & 0 \\
             \end{array}
           \right)
\end{equation*}

\end{theorem}
\begin{proof}
Let\\ $SO(3)=\left\{a_{1}(t)=\left(
             \begin{array}{ccc}
               1 & 0 & 0 \\
               0 &  \cos(t) & -\sin(t) \\
               0 & \sin(t) & \cos(t) \\
             \end{array}
           \right),a_{2}(t)=\left(
             \begin{array}{ccc}
               \cos(t) & 0 & \sin(t) \\
               0 &  1 & 0 \\
               -\sin(t) & 0 & \cos(t) \\
             \end{array}
           \right),a_{3}(t)=\left(
             \begin{array}{ccc}
               \cos(t) & -\sin(t) & 0 \\
               \sin(t) &   \cos(t) & 0 \\
               0 & 0 & 1 \\
             \end{array}
           \right)\right\}$
we have $Z_{1}=\frac{d}{dt}a_{1}(t)\mid_{t=0},
Z_{2}=\frac{d}{dt}a_{2}(t)\mid_{t=0},Z_{3}=\frac{d}{dt}a_{3}(t)\mid_{t=0}$
and we obtain $Z_{1}=\left(
             \begin{array}{ccc}
               0 & 0 & 0 \\
               0 &  0 & -1 \\
               0 & 1 & 0 \\
             \end{array}
           \right),
Z_{2}=\left(
             \begin{array}{ccc}
               0 & 0 & 1 \\
               0 &  0 & 0 \\
               -1 & 0 & 0 \\
             \end{array}
           \right),Z_{3}=\left(
             \begin{array}{ccc}
               0 & -1 & 0 \\
               1 &  0 & 0 \\
               0 & 0 & 0 \\
             \end{array}
           \right).$

           \begin{eqnarray*}
           \left[Z_{3},Z_{3}\right]&=& 0\\
           \left[Z_{3},Z_{1}\right]&=& Z_{2} \in\mathfrak{p}\\
           \left[Z_{1},Z_{2}\right]&=& Z_{3}\in span\{Z_{3}\} \\
           \left[Z_{2},Z_{1}\right]&=& Z_{3} \\
             \left[\beta,Z_{1}\right]&=& aZ_{2} \\
             \left[\beta,Z_{2}\right]&=& aZ_{1} \\
             \left [\beta,Z_{3}\right]&=& 0
              \end{eqnarray*}

             \begin{eqnarray*}\left[Z_{1},\left [[\beta,Z_{1}\right]\right] &=& a Z_{3}\\
            \left[Z_{1},\left[\beta,Z_{2}\right]\right] &=& 0\\
             \left[Z_{2},\left[\beta,Z_{2}\right]\right] &=& -a Z_{3}\\
             \left[Z_{2},\left[\beta,Z_{1}\right]\right] &=& 0\\
             \left[Z_{1},\left[\beta,Z_{3}\right]\right] &=& 0\\
             \left [Z_{2},\left[\beta,Z_{3}\right]\right] &=& 0\\
               \left[Z_{3},\left[\beta,Z_{3}\right]\right] &=& 0\\
                \left[Z_{3},\left[\beta,Z_{2}\right]\right] &=& 0\\
                 \left[Z_{3},[\beta,Z_{1}]\right] &=& 0
                 \end{eqnarray*}
                 \begin{eqnarray*}
                \Theta(Z_{3})&=&Ad_{J}( Z_{3})=Z_{3}\\
                \Theta(Z_{1})&=&Ad_{J}( Z_{1})= -Z_{1}\\
                \Theta(Z_{2})&=&Ad_{J}( Z_{2})= -Z_{2}\\
                \tilde{\Theta}(Z_{1},Z_{3}) &=&-\langle Z_{1},Z_{3}\rangle=0\\
                \tilde{\Theta}(Z_{2},Z_{3}) &=&-\langle Z_{2},Z_{3}\rangle=0\\
                \tilde{\Theta}(Z_{2},Z_{2}) &=&-\langle Z_{2},Z_{2}\rangle=0\\
                \tilde{\Theta}(Z_{1},Z_{1}) &=&-\langle Z_{1},Z_{1}\rangle=0\\
                \tilde{\Theta}(Z_{3},Z_{1}) &=&\langle Z_{3},Z_{1}\rangle=0\\
                \tilde{\Theta}(Z_{2},Z_{2}) &=&\langle
                Z_{3},Z_{2}\rangle=0.
           \end{eqnarray*}
 \begin{eqnarray*}
         g_{\beta}\left(\left[\beta,Z_{1}\right],\left[\beta,z_{2}\right]\right) &=&
         \tilde{\Theta}\left(Z_{1},\left[\beta,Z_{2}\right]\right)+\langle\eta, \left[z_{1},\left[\beta,Z_{2}\right]\right] \\
        &=& a\tilde{\Theta}\left(Z_{1},Z_{3}\right)+\langle\eta, \left[Z_{1},\left[\beta,Z_{2}\right]\right] \\
         &=&-2a^{2}\\
         g_{\beta}\left(\left[\beta,Z_{2}\right],\left[\beta,Z_{1}\right]\right) &=&
         \tilde{\Theta}\left(Z_{2},\left[\beta,Z_{1}\right]\right)+\langle\eta, \left[Z_{2},\left[\beta,Z_{1}\right]\right] \\
         &=& a\tilde{\Theta}\left(Z_{1},Z_{2}\right)+\langle\eta, \left[Z_{1},\left[\beta,Z_{2}\right]\right] \\
         &=&0\\
          g_{\beta}\left(\left[\beta,Z_{1}\right],\left[\beta,Z_{2}\right]\right) &=&
         \tilde{\Theta}\left(Z_{1},\left[\beta,Z_{2}\right]\right)+\langle\eta, \left[Z_{1},\left[\beta,Z_{2}\right]\right] \\
         &=&0\\
 g_{\beta}\left(\left[\beta,Z_{2}\right],\left[\beta,Z_{2}\right]\right) &=&
         \tilde{\Theta}\left(Z_{2},\left[\beta,Z_{2}\right]\right)+\langle\eta, \left[Z_{2},\left[\beta,Z_{2}\right]\right] \\
         &=&2a^{2}\\
         g_{\beta}\left(\left[\beta,Z_{3}\right],\left[\beta,z_{2}\right]\right) &=&
         -a\tilde{\Theta}\left(Z_{3},\beta,Z_{1}\right)+\langle\eta, \left[Z_{3},\left[\beta,z_{2}\right]\right] \\
         &=&0\\
         g_{\beta}\left(\left[\beta,Z_{3}\right],\left[\beta,Z_{1}\right]\right) &=&
         a\tilde{\Theta}\left(Z_{3},\beta,Z_{2}\right)+\langle\eta, \left[Z_{3},\left[\beta,z_{1}\right]\right] \\
         &=&0\\
         g_{\beta}\left(\left[\beta,Z_{1}\right],\left[\beta,Z_{3}\right]\right) &=&0\\
         g_{\beta}\left(\left[\beta,Z_{2}\right],\left[\beta,Z_{3}\right]\right) &=&0\\
         g_{\beta}\left(\left[\beta,Z_{3}\right],\left[\beta,Z_{3}\right]\right)
         &=&0.
\end{eqnarray*}

\end{proof}

\begin{theorem}\label{th10}
Let $\beta=\left(
             \begin{array}{ccc}
               0 & -a & 0 \\
               a & 0 & 0 \\
               0 & 0 & 0 \\
             \end{array}
           \right)\in \mathfrak{so}(3), \; a\in \mathrm{I\!
R}$.  The gradient system is given by
\begin{equation}\label{ha1}
\dot{\beta}=-\beta\end{equation} where its reduced form is given by
\begin{equation}\label{haa} \dot{a}=-a.\end{equation}
Consider the subspace $\Omega^{*}=\left\{\xi=\left(
             \begin{array}{ccc}
               0 & -x & 0 \\
               x & 0 & 0 \\
               0 & 0 & 0 \\
             \end{array}
           \right)
                                , x >0 \right\}\subset\mathfrak{so}(3)$, and $\nu^{*}:O(3)\times
\Omega^{*}\longrightarrow\Omega^{*}:(J,\xi)\longmapsto
Ad^{*}_{J}(\xi)$ the action of Lie group $O(3)$. Then there exists a
foliation whose leaves are the orbit of the $1$-dimensional foliated
action $\mathcal{O}_{\xi}(\nu^{*})=\left\{\left(
             \begin{array}{ccc}
               0 & -x & 0 \\
               x & 0 & 0 \\
               0 & 0 & 0 \\
             \end{array}
           \right),\;
x>0\right\}$ diffeomorphic to the $1$-dimensional torus. In this
case the system (\ref{ha1}) and (\ref{haa}) are completely
integrable.
\end{theorem}

\begin{proof}
Let  $\nu^{*}:O(3)\times
\Omega^{*}\longrightarrow\Omega^{*}:(g,\xi)\longmapsto
Ad^{*}_{g}(\xi)$, us first show that $\nu^{*}$ is action.
 Let
                                 $O(3)=\left\{\left(
             \begin{array}{ccc}
               1 & 0 & 0 \\
               0 & 1 & 0 \\
               0 & 0 & -1 \\
             \end{array}
           \right),\left(
             \begin{array}{ccc}
               1 & 0 & 0 \\
               0 & -1 & 0 \\
               0 & 0 & 1 \\
             \end{array}
           \right),\left(
             \begin{array}{ccc}
               -1 & 0 & 0 \\
               0 & 1 & 0 \\
               0 & 0 & 1 \\
             \end{array}
           \right)\right\}$ the Lie group.  For any $ J\in O(3)$, show that
\begin{equation*}\nu^{*}(J,\xi)=\xi.\end{equation*} We have
\begin{equation*}\nu^{*}(J,\xi)=Ad^{*}_{J}(\xi)=\left(
             \begin{array}{ccc}
               1 & 0 & 0 \\
               0 & 1 & 0 \\
               0 & 0 & -1 \\
             \end{array}
           \right)\left(
             \begin{array}{ccc}
               0 & -x & 0 \\
               x & 0 & 0 \\
               0 & 0 & 0 \\
             \end{array}
           \right)\left(
             \begin{array}{ccc}
               1 & 0 & 0 \\
               0 & 1 & 0 \\
               0 & 0 & -1 \\
             \end{array}
           \right)=\left(
             \begin{array}{ccc}
               0 & -x & 0 \\
               x & 0 & 0 \\
               0 & 0 & 0 \\
             \end{array}
           \right)\end{equation*}
     Let $J_{1}=\left(
             \begin{array}{ccc}
               1 & 0 & 0 \\
               0 & 1 & 0 \\
               0 & 0 & -1 \\
             \end{array}
           \right),\;J_{2}=\left(
             \begin{array}{ccc}
               1 & 0 & 0 \\
               0 & -1 & 0 \\
               0 & 0 & 1 \\
             \end{array}
           \right)$,\; show that
                                 $\nu^{*}\left(J_{1}J_{2},\xi\right)=\nu^{*}\left(J_{1},\nu^{*}\left(J_{2},\xi\right)\right).$
We have

\begin{equation*}
    \nu^{*}\left(J_{1}J_{2},\xi\right)=\left(J_{1}J_{2}\right)^{-1}\xi\left(J_{1}J_{2}\right)\end{equation*}.
So,
\begin{equation}\label{c0}\nu^{*}\left(J_{1}J_{2},\xi\right)=J_{2}^{-1}\left(J_{1}^{-1}\xi
J_{1}\right)J_{2}=J_{2}^{-1}\xi J_{2}=\xi.
\end{equation}
 In same, we have
\begin{equation}\label{c01}\nu^{*}\left(g_{1},\nu^{*}\left(J_{2},\xi\right)\right)=\nu^{*}\left(J_{1},\xi\right)=\xi.\end{equation}
Using (\ref{c}) and (\ref{c1}) we obtain
$\nu^{*}\left(J_{1}J_{2},\xi\right)=\nu^{*}\left(J_{1},\nu^{*}\left(J_{2},\xi\right)\right)$.
The defined gradient system will be given by
\begin{equation}
\dot{\beta}=-a^{2}\partial_{\beta}\Phi(\beta)\rangle\end{equation}
where $\partial_{\beta}:=\frac{\partial}{\partial \beta}$. So,
\begin{equation}\label{a}
\dot{\beta}=-\beta=\left(
             \begin{array}{ccc}
               0 & a & 0 \\
               -a & 0 & 0 \\
               0 & 0 & 0 \\
             \end{array}
           \right)\end{equation}
In the same,
\begin{equation*}
\dot{\beta}=\frac{\partial \beta}{\partial a}\dot{a}\end{equation*}
We obtain  \begin{equation} \dot{\beta}=\left(
             \begin{array}{ccc}
               0 & -\dot{a} & 0 \\
               \dot{a} & 0 & 0 \\
               0 & 0 & 0 \\
             \end{array}
           \right)\end{equation} Thus by
identification with \ref{a}, we have \begin{equation*}
\dot{a}=-a\end{equation*}
\end{proof}

\vfill\eject

\section{Conclusion}\label{sec6}
This work has generalized the Fisher metric in the framework of the
thermodynamics of Souriau Lie groups, highlighting the deep
connections between information geometry,  and algebraic structures.
Focusing on the $SO(2)$ and $SO(3)$ Lie groups, we have explicitly
characterized the effect of central 2-cocycles on the Fisher metric
and demonstrated the full integrability of the associated gradient
systems.  These results not only enrich existing theory but also
open the way to promising applications in fields such as geometric
machine learning and neuro-inspired optimization.


\backmatter

\bmhead{Supplementary information} This manuscript has no additional
data.

\bmhead{Acknowledgments} We thank all the members of the Algebra,
Geometry and Applications Laboratory  of the University of Yaounde1
for their suggestions in the work. We thank Professor Joseph Dongho
of the University of Maroua for his comments without forgetting
Professor Thomas Bouetou Bouetou of the Polytechnic School of
Yaounde1. We thank Professor Michel Nguiffo Boyom of the Montpellier
Institute Alexander Grothendiek for his relevant comments and
questions in order to improve the work. We thank Professor
Fr$\acute{e}$d$\acute{e}$ric Barbresco of Thalesgroup, for these
lines of research. A big thank you to Frankel Nozah Mba and Romain
Boris Bopda Gopdjum for their comments and work provided in order to
improve the paper.
\section*{Declarations}
This article has no conflict of interest to the journal. No
financing with a third party.
\begin{itemize}
\item No Funding
\item No Conflict of interest/Competing interests (check journal-specific guidelines for which heading to use)
\item  Ethics approval
\item  Consent to participate
\item  Consent for publication
\item  Availability of data and materials
\item  Code availability
\item Authors' contributions
\end{itemize}

\noindent



\bigskip\noindent

\bibliography{sn-bibliography}

\end{document}